\newcommand{\R}{\ensuremath{\mathbb{R}}}
\newcommand{\eps}{\varepsilon}
\newcommand{\bP}{\mathbb{P}}
\newcommand{\cP}{\mathcal{P}}
\theoremstyle{plain}
\newtheorem{theorem}{Theorem}
\newtheorem{lemma}[theorem]{Lemma}
\theoremstyle{remark}
\newtheorem{remark}[theorem]{Remark}
\theoremstyle{definition}
\newtheorem{defn}[theorem]{Definition}
\newtheorem{example}[theorem]{Example}
\newtheorem{definition}[theorem]{Definition}
\newcounter{enumctr}
\newcommand{\D}{\mathcal{D}}
\newcommand{\F}{\mathbf{F}}
\newcommand{\RR}{\mathbb{R}}
\newcommand{\bel}[1]{\begin{equation}\label{#1}}
\newcommand{\be}{\begin{equation}}
\newcommand{\ba}{\begin{eqnarray}}
\newcommand{\ea}{\end{eqnarray}}
\newcommand{\qe}{\end{equation}}
\newcommand{\supp}{\rm{supp}}
\newcommand{\Om}{\Omega}
\begin{document}

\title{Ergodicity of scalar stochastic differential equations with H\"older continuous coefficients}
\author{Luu Hoang Duc
  \and
  Tat Dat Tran\\
  [4ex]\centering{{\bf Working paper}}
  \and
  J\"urgen Jost
  }

\newcommand{\Addresses}{{% additional braces for segregating \footnotesize
  \bigskip
  \footnotesize

  Luu Hoang Duc, \textsc{Max-Planck-Institut f\"ur Mathematik in den Naturwissenschaften,\\\hspace*{3,0cm} Institute of Mathematics, Viet Nam Academy of Science and Technology}\par\nopagebreak
  \textit{E-mail address}: \texttt{duc.luu@mis.mpg.de, lhduc@math.ac.vn}

  \medskip

  Tat Dat Tran, \textsc{Max-Planck-Institut f\"ur Mathematik in den Naturwissenschaften}\par\nopagebreak
  \textit{E-mail address}: \texttt{trandat@mis.mpg.de}

  \medskip

  J\"urgen Jost, \textsc{Max-Planck-Institut f\"ur Mathematik in den Naturwissenschaften\\
  \hspace*{2,3cm} Santa Fe Institute for the Sciences of Complexity, Santa Fe, NM 87501, USA}\par\nopagebreak
  \textit{E-mail address}: \texttt{jost@mis.mpg.de}
}}

\date{}
\maketitle

\tableofcontents

\begin{abstract}
It is well-known that for a one dimensional stochastic differential equation driven by Brownian noise, with coefficient functions satisfying the assumptions of the Yamada-Watanabe theorem \cite{yamada1,yamada2} and the Feller test for explosions \cite{feller51,feller54}, there exists a unique stationary distribution with respect to the Markov semigroup of transition probabilities. We consider systems on a restricted domain $D$ of the phase space $\R$ and study the rate of convergence to the stationary distribution. Using a geometrical approach that uses the so called {\it free energy function} on the density function space, we prove that the density functions, which are solutions of the Fokker-Planck equation, converge to the stationary density function exponentially under the Kullback-Leibler {divergence}, thus also in the total variation norm. The results show that there is a relation between the Bakry-Emery curvature dimension condition and the dissipativity condition of the transformed system under the Fisher-Lamperti transformation. Several applications are discussed, including the Cox-Ingersoll-Ross model and the Ait-Sahalia model in finance and the Wright-Fisher model in population genetics.
\end{abstract}

{\bf Keywords:}
stationary distributions, invariant measure, Fokker-Planck equation, Kullback-Leibler divergence, Cox-Ingersoll-Ross model, Ait-Sahalia model, Wright-Fisher model.

%%%%%%%%%%%%%%%%%%%%%%%%%%%%%%%%%%%%%%%%%%%%

\section{Introduction}

Stochastic differential equations with H\"older continuous coefficients arise as  models in many sciences.  An instance is the  Wright-Fisher diffusion model for the genetic drift of alleles with mutation \cite{ewens}; in the one dimensional case (that is, when there are only two alleles present), it  is of the form
\begin{equation*}
dX_t = [\theta_1 - (\theta_1 + \theta_2) X_t]dt + \sqrt{X_t(1-X_t)}dW_t,\quad  X_t \in [0,1], 
\end{equation*}
where we assume that the mutation rates $\theta_1,\theta_2 > 0$. Another example is the Cox-Ingersoll-Ross \cite{cox} model for the  short term interest rate
\begin{equation*}
dX_t = k(\theta -X_t)dt + \sigma \sqrt{X_t}dW_t,\quad  X_t \geq 0, 
\end{equation*}
where $k,\theta,\sigma >0$ satisfy $k\theta \geq \frac{\sigma^2}{2}$.\\
For such nonlinear systems, the diffusion coefficient is only  H\"older continuous on a restricted domain \cite{delbaen}, therefore the existence and uniqueness of a solution cannot be proved using classical arguments like contraction mappings, but one rather needs to invoke the Yoshida and Watanabe theorem \cite{yamada1, yamada2,ikeda}. Moreover, the solution in general does not depend differentiably on the initial values in the state space, and thus the linearization method for studying the stability problem fails to apply here. \\
In this paper, we study the Markov semigroup generated by such a system; this is a strong Feller process \cite{feller51, feller54} provided that the Feller test of explosion succeeds. In particular, assume that $D$ is a Polish space (complete metric and separable). A probability measure $\mu$ on $D$ is called {\it stationary (invariant)}\index{measure!invariant} with respect to the (Markov) diffusion process $X_t$ with  semigroup\index{strongly continuous semigroup} $(T_t)_{t\ge 0}$ on $C^\infty_0(D)$ and  generator $G$ if
$$\int_{D} T_t f(y) \mu(dy)=\int_{D} f(y) \mu(dy), \quad \forall t\ge 0, f\in C^\infty_0(D),$$
or equivalently (due to \cite{fukushima} Theorem 2.3)
$$\int_{D} G f(y) \mu(dy)= 0, \quad \forall f\in C^\infty_0(D).$$
It is called {\it reversible} if\index{measure!reversible}
$$\int_{D} g(y)T_t f(y) \mu(dy)=\int_{D} f(y) T_t g(y)\mu(dy), \quad \forall t\ge 0, f,g\in C^\infty_0(D),$$
or equivalently
$$\int_{D} g(y)G f(y) \mu(dy)=\int_{D} f(y)G g(y) \mu(dy) , \quad \forall f,g\in C^\infty_0(D).$$
Also, it is well-known that the density function of the transition probability satisfies the Fokker-Planck equation. The existence of stationary measures with respect to the Markov semigroup is deduced from the Krylov-Bogoliubov theorem \cite{prato}. However, under the assumptions on reversibility and boundary conditions, we can prove directly that there exists a unique stationary measure for the Markov semigroup which can be written as a Gibbs measure $\mu_\infty(dy) = u_\infty(y)dy = \frac{e^{-\psi(y)}}{Z}dy$. \\
Another important issue is the rate of convergence to the stationary distribution. It is well known from the Harris theorem \cite{harris} that if there exists a Lyapunov type function, an initial distribution under the Markov semigroup will converge exponentially to the stationary distribution. Such a Lyapunov function is constructed in Hairer \cite{hairer} for the system with additive noise, however it is not explicitly given in general. \\
Our approach here is to present an explicit construction of a Lyapunov type function for the Markov semigroup on the space of probability measures. Specifically, we consider only those systems in which there exists a so-called {\it free energy functional} defined as the sum of the {\it relative entropy} and the {\it potential energy} (see, e.g. \cite{jordan}) along the flow of densities $\{u(t,x,\cdot)\}_{t\ge 0}$. We then prove that the free energy functional decreases in time $t$ along the flow of densities.\\
Our main result is the convergence rate of the densities under the flow to the stationary density using the geometrical method by Bakry-Emery \cite{bakry, gross}. By considering the backward generator of the Markov semigroup $(L^*,\D(L^*))$, the {\it carr\'e du champ operator}, the {\it iterated carr\'e du champ operator} of $L^*$ and the {\it Bakry-Emery curvature-dimension condition}, we derive a good estimate for the exponential rate of convergence of the density function $u(t,x,\cdot)$ to $u_\infty(\cdot)$ with respect to the Kullback-Leibler divergence.\\
It is interesting to note that for one dimensional systems, there is a relation between the Bakry-Emery curvature dimension condition and the dissipativity condition of the transformed system under the so called {\it Fisher-Lamperti transformation} which is supposed to be continuous. This transformation helps to prove the existence of a global random attractor (see \cite{arnold}, \cite{crauel08, crauel99}, \cite{schmalfuss}) for the Cox-Ingersoll-Ross model \eqref{cox}, the Wright-Fisher model \eqref{wright} and  the Ait-Sahalia model \eqref{sahalia}.

%%%%%%%%%%%%%%%%%%%%%%%%%%%%%%%%%%%%%%%%%%%%

\section{Existence and uniqueness of the stationary distribution}

We consider the following one dimensional stochastic system
\begin{equation}\label{SDE}
dX_t = a(X_t) dt + b(X_t) dW_t
\end{equation}
where $D\in \Big\{[d_-, d_+], (-\infty, d_+], [d_-,\infty), \R\Big\}$, and $a, b: D \to \R$ are $C^3$-functions on the interior of a domain $D \subset \R$ ($D$ can contain $\infty$), which satisfy the following assumptions.
\paragraph{Hypothesis A:} The functions $a,b$ satisfy
\begin{itemize}
\item there exists a positive increasing concave function $L: [0,\infty) \to [0,\infty)$ such that $|a(x)-a(y)| \leq L(|x-y|)$ for all $x,y \in D$ and $\int_{0^+}^p L(r)^{-1}dr = \infty$ for every $p >0$; 
\item there exists a positive increasing function $\rho: [0,\infty) \to [0,\infty)$ such that $|b(x)-b(y)| \leq\rho(|x-y|)$ for all $x,y \in D$, and $\int_{0^+}^p \rho(r)^{-2}dr = \infty$ for every $p>0$.
\item boundary conditions: $a(d_-) >0 > a(d_+); b(d_-) =b(d_+) =0$, $b(x)>0$ for $x\in (d_-,d_+)$. 
\end{itemize}   

%%%%%%%%%%%%%%%%%%%%%%%%%%%%%%

\begin{theorem}\label{existence-uniqueness}
Under Hypothesis A for the coefficient functions $a,b$ there exists a unique solution of \eqref{SDE} with initial condition $X_0 = x \in D$.
\end{theorem}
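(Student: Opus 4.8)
The plan is to derive existence and uniqueness of a strong solution from the Yamada--Watanabe principle: it suffices to establish (i) pathwise uniqueness for \eqref{SDE}, and (ii) existence of a weak solution whose trajectories remain in $D$ for all time; then the Yamada--Watanabe theorem \cite{yamada1,yamada2,ikeda} yields a unique strong solution for every initial value $x\in D$. Hypothesis A is tailored precisely to this split: the modulus $\rho$ with $\int_{0^+}\rho(r)^{-2}dr=\infty$ governs the diffusion part, the concave modulus $L$ with $\int_{0^+}L(r)^{-1}dr=\infty$ governs the drift part, and the boundary conditions are responsible for confinement to $D$.

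For (i) I would use the classical Yamada--Watanabe smoothing construction. From $\int_{0^+}\rho(r)^{-2}dr=\infty$, choose $1=a_0>a_1>\cdots\downarrow0$ with $\int_{a_k}^{a_{k-1}}\rho(r)^{-2}dr=k$, pick continuous $\psi_k\ge0$ supported in $(a_k,a_{k-1})$ with $\psi_k(r)\le\tfrac2k\rho(r)^{-2}$ and $\int\psi_k=1$, and set $\phi_k(z)=\int_0^{|z|}\int_0^y\psi_k(r)\,dr\,dy$; then $\phi_k\in C^2(\R)$, $\phi_k(z)\uparrow|z|$, $|\phi_k'|\le1$, and $0\le\phi_k''(z)\le\tfrac2k\rho(|z|)^{-2}$. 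If $X,Y$ solve \eqref{SDE} with the same Brownian motion and $X_0=Y_0=x$, then along a localizing sequence $\tau_R$ (the exit time from $D\cap[-R,R]$, a compact subset of $\overline D$ on which $a,b$ are bounded) Itô's formula applied to $\phi_k(X_t-Y_t)$, after discarding the zero-mean martingale term, gives
\[
\E\,\phi_k\big(X_{t\wedge\tau_R}-Y_{t\wedge\tau_R}\big)\le\E\int_0^{t\wedge\tau_R}\!\Big(\tfrac12\phi_k''(X_s-Y_s)\big(b(X_s)-b(Y_s)\big)^2+\phi_k'(X_s-Y_s)\big(a(X_s)-a(Y_s)\big)\Big)ds ,
\]
where the first integrand is $\le\tfrac12\cdot\tfrac2k\rho(|X_s-Y_s|)^{-2}\rho(|X_s-Y_s|)^2=\tfrac1k$ and the second is $\le L(|X_s-Y_s|)$. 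Writing $g_R(t):=\E|X_{t\wedge\tau_R}-Y_{t\wedge\tau_R}|$, letting $k\to\infty$ (monotone convergence on the left, dominated convergence on the right) and using Jensen's inequality together with the concavity of $L$ gives $g_R(0)=0$ and $g_R(t)\le\int_0^t L(g_R(s))\,ds$; since $\int_{0^+}L(r)^{-1}dr=\infty$, Bihari's inequality forces $g_R\equiv0$, and letting $R\to\infty$ (using non-explosion from (ii)) shows $X=Y$ almost surely.

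For (ii), on every compact subinterval of $\mathrm{int}\,D$ the $C^3$ coefficients are Lipschitz, so classical theory produces a unique strong solution up to the first time $\sigma$ the path leaves $\mathrm{int}\,D$ or, in the unbounded cases, explodes; it then remains to show $\sigma=\infty$ a.s. Explosion to $\pm\infty$ is excluded by the at-most-linear growth of $a$ (inherited from the concavity of $L$) through a Khasminskii--Lyapunov estimate. At a finite boundary, say $d_+$, the conditions $b(d_+)=0$ and $a(d_+)<0$ mean that near $d_+$ the drift is bounded away from $0$ and points inward while $|b(x)|=|b(x)-b(d_+)|\le\rho(d_+-x)$; the scale density $s'(x)=\exp\big(-\int^x 2a/b^2\big)$ then diverges rapidly enough as $x\uparrow d_+$ — the condition $\int_{0^+}\rho(r)^{-2}dr=\infty$ being exactly what is needed — that $s(d_+)=\infty$, and Feller's test for explosions \cite{feller51,feller54} shows $d_+$ is inaccessible; symmetrically for $d_-$ using $a(d_-)>0$. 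Hence the solution stays in $D$ for all time, and with (i) the Yamada--Watanabe theorem gives the asserted unique strong solution. (Alternatively, a global $D$-valued weak solution can be produced by approximating $a,b$ by smooth coefficients that still vanish and point inward at $\partial D$, solving the approximations, and passing to a weak limit by tightness.)

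The step I expect to be the main obstacle is (ii): knitting the local construction together with non-attainability of the \emph{degenerate} boundary, in particular verifying the Feller integrability criterion from the one-sided bound $|b(x)|\le\rho(|x-d_\pm|)$ and the sign of $a$ at the boundary (the borderline $\rho(r)\asymp\sqrt r$ scaling, as in the Cox--Ingersoll--Ross model, being the delicate case, which is why the quantitative condition $k\theta\ge\sigma^2/2$ is imposed there), and ruling out explosion at $\pm\infty$ in the unbounded settings. A secondary technical point is arranging the two limits $k\to\infty$ and $R\to\infty$ in (i) compatibly with possible excursions of the solution to $\partial D$.
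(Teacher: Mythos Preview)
Your pathwise-uniqueness argument (i) is fine; it is essentially the proof of the Yamada--Watanabe theorem, which the paper simply cites as a black box.

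The gap is in (ii), precisely at the point you flag as the obstacle, and your proposed resolution does not work. The claim that ``$\int_{0^+}\rho(r)^{-2}dr=\infty$ is exactly what is needed for $s(d_\pm)=\pm\infty$'' is false. From $|b(x)|\le\rho(|x-d_\pm|)$ and the sign of $a$ at the boundary you only obtain $s'(x)\ge\exp\big(c\int_{|x-d_\pm|}\rho^{-2}\big)$, so $s'\to\infty$; but divergence of $s'$ does not force divergence of $\int s'$. The CIR model is already a counterexample: with $a(x)=k(\theta-x)$, $b(x)=\sigma\sqrt x$ and $\rho(r)=\sigma\sqrt r$, Hypothesis~A holds for \emph{every} $k,\theta,\sigma>0$, yet $s'(x)\sim C\,x^{-2k\theta/\sigma^2}$ near $0$, so $\int_0 s'<\infty$ whenever $2k\theta<\sigma^2$ and the origin is attainable. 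The extra condition $k\theta\ge\sigma^2/2$ you invoke is \emph{not} part of Hypothesis~A, so Theorem~\ref{existence-uniqueness} must also cover the attainable-boundary regime; Feller inaccessibility cannot be the mechanism here. A related defect: your local construction in $\mathrm{int}\,D$ does not even start when the initial point $x$ lies on $\partial D$, which the theorem allows.

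The paper's route is different and avoids boundary classification altogether. It extends $(a,b)$ to all of $\R$ by freezing boundary values, $\bar a\equiv a(d_\pm)$ and $\bar b\equiv 0$ outside $D$; the extended pair still satisfies Hypothesis~A, so Yamada--Watanabe yields a unique global strong solution $\bar X$ on $\R$. Confinement is then a direct stopping-time argument (Deelstra--Delbaen): if $\bar X$ ever reached $d_--2\varepsilon$ it would first cross $d_--\varepsilon$, but on $(-\infty,d_-]$ the extended equation is the deterministic ODE $d\bar X=a(d_-)\,dt$ with $a(d_-)>0$, so the path is strictly increasing there---a contradiction; iterating manufactures infinitely many oscillations between $d_--\varepsilon$ and $d_-$ before a fixed time $T$, contradicting continuity. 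Hence $\bar X_t\in D$ for all $t$ (touching $\partial D$ is permitted, not excluded), and $\bar X$ solves the original SDE on $D$. This argument is insensitive to whether $\partial D$ is accessible, which is exactly what your Feller-test approach cannot accommodate.
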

\begin{proof}
First, we extend (if necessary) the coefficient functions of \eqref{SDE} from the domain $D$ into the whole real line by considering the system
\begin{equation}\label{SDE1}
dX_t = \bar{a}(X_t)dt + \bar{b}(X_t)dW_t
\end{equation}
where $\bar{a}(x) \equiv a(x), \bar{b}(x) \equiv b(x)$ for all $x\in D$ and $\bar{a}(x) = a(d_-) (>0), \bar{b}(x) = b(d_-) (=0)$ for all $x < d_-$, $\bar{a}(x) = a(d_+) (<0), \bar{b}(x) = b(d_+) (=0)$ for all $x>d_+$. It is easy to check that $\bar{a}, \bar{b}$ then satisfy Hypothesis A. Hence by applying the Yamada and Watanabe theorem (see \cite{yamada1, yamada2} or the Ikeda and Watanabe theorem \cite[Theorem 3.2, p.~168]{ikeda}), there exists a unique solution $\bar{X}_t$ of \eqref{SDE1} with initial condition $X_0$.

Next, we need to prove that all solutions of \eqref{SDE1} stay in the domain $D$ almost surely, i.e. $\mathbb{P}(\bar{X}_t(\cdot,x) \in D, \forall t \in [0,T]) = 1$ for any $x\in D$. It is enough to prove that in case the left boundary of $D$ is $d_-$ then  $\mathbb{P}(\bar{X}_t(\cdot,x) \geq d_-, \forall t \in [0,T]) = 1$.  To do that, we follow the Deelstra and Delbaen technique \cite{delbaen}  by introducing for any $\epsilon >0$ stopping times
\begin{eqnarray*}
\tau_1 &=& \inf \{t \in [0,T]| \bar{X}_t < d_- - \epsilon \} \\
\sigma_1^1 &=& \inf \{t \in [0,T] | \bar{X}_t < d_- - 2 \epsilon\}\\
\sigma_1^2 &=& \inf \{ t>\tau_1 | \bar{X}_t =d_-\}\\
\sigma_1 &=& \sigma_1^1 \wedge \sigma_1^2 \wedge T.
\end{eqnarray*} 
Trivially $\tau_1 < \sigma_1 \leq T$. Define the set $A := \{\inf_{u \in [0,T]} \bar{X}_u < d_--2\epsilon\}$. If $\sigma_1^1 \leq \sigma_1^2 $ then $\sigma_1 = \sigma_1^1$, hence $\bar{X}_{\sigma_1} - \bar{X}_{\tau_1} = -\epsilon$ due to the continuity of $\bar{X}$ over time. But on the other hand,
$\bar{b}(u)=b(d_-) =0$ and $\bar{a}(u) = a(d_-) >0$ for every $u \in [\tau_1,\sigma_1]$, thus by using Ito's formula for the stopping times,
\[
\bar{X}_{\tau_1} -\bar{X}_{\sigma_1} = \int_{\tau_1}^{\sigma_1} \bar{a}(u)du +\int_{\tau_1}^{\sigma_1} \bar{b}(u)dW_u > 0, 
\]
which is a contradiction. We conclude that on this set, $\sigma_1^2 < \sigma_1^1\leq T$ almost surely. By definition, we conclude that $\bar{X}_{\sigma_1} = d_-$ on $A$.\\
Similarly, define some more stopping times:
\begin{eqnarray*}
\tau_2 &=& \inf \{\sigma_1<t \in [0,T]| \bar{X}_t < d_- - \epsilon \} \wedge T\\
\sigma_2^1 &=& \inf \{\sigma_1<t \in [0,T] | \bar{X}_t < d_- - 2 \epsilon\}\\
\sigma_2^2 &=& \inf \{ t>\tau_2 | \bar{X}_t =d_-\}\\
\sigma_2 &=& \sigma_2^1 \wedge \sigma_2^2 \wedge T.
\end{eqnarray*} 
Analogously on $A$, $\tau_1<\sigma_1<\tau_2<\sigma_2 \leq T $ and $\bar{X}_{\tau_2}=d_-$. Therefore, we can repeat this argument and conclude that on $A$ there exists a strictly increasing sequence of stopping times $\tau_1<\sigma_1 < \ldots < \tau_n < \sigma_n < \ldots \leq T$. This sequence converges to a limit $\tau \leq T$, hence all the subsequences $(\tau_n)$ and $(\sigma_n)$ have to converge to $\tau$. However, $\bar{X}_{\tau_n} = d_--\epsilon$ while $\bar{X}_{\sigma_n} = d_-$ on $A$. By the continuity of $\bar{X}$ over time, it follows that $A$ has measure zero, or equivalently $\mathbb{P} (\inf_{u \in [0,T]} \bar{X}(u)< d_--2\epsilon ) =0$. Since it is true for any $\epsilon >0$, we have proved that $\mathbb{P} (\inf_{u \in [0,T]} \bar{X}(u)< d_-) =0$.  

Finally, by the definition of $\bar{a}, \bar{b}$, the solution $\bar{X}$ is actually also a solution of \eqref{SDE}, thus we conclude the existence and uniqueness of \eqref{SDE} on the domain $D$ for the time interval $[0,T]$. As $T$ varies, we actually can extend the solution of \eqref{SDE1} to the whole  time interval $[0,\infty)$.

\end{proof}

%%%%%%%%%%%%%%%%%%%%%%%%%%%%%%%%%%%%%%%%%%%%

It is well-known that the density function of the Markov semigroup with transition probability $\cP_t(x,O):= \bP\{X_t(\cdot,x) \in O| X_0 = x\}$ satisfies the Fokker-Planck equation
\begin{eqnarray}\label{fokkerplanck}
\partial_t u(t,x,y) &=& L u(t,x,y)\nonumber\\
&=& -\partial_y [a(y)u(t,x,y)] + \frac{1}{2} \partial_{yy} [b^2(y) u(t,x,y)] \nonumber\\
&=& \partial_y\Big( -a(y)u(t,x,y) + \frac{1}{2}\partial_y \big(b^2(y) u(t,x,y)\big) \Big) .  
\end{eqnarray}
In this section, we only consider  systems that satisfy special conditions which ensure the existence of a free energy function. To do that, we first consider the definition.

%%%%%%%%%%%%%%%%%%%%%%%%%%%%%%

\begin{defn}
The  family of densities $\{u(t,\cdot)\}_{t\ge 0}$ on a  $\sigma-$finite measure space $(D,\mu)$ is said to satisfy the condition $I(A,\psi)$ if it solves a diffusion equation of the form
\begin{eqnarray}\label{DEE}
\partial_t u(t,y) &=  & \partial_y \Big(A(y) u(t,y) \partial_{y} \big[\log u(t,y) + \psi(y)\big]\Big),
\end{eqnarray}
where $A(y) = \frac{1}{2}b^2(y)$. This family of densities $\{u(t,\cdot)\}_{t\ge 0}$  is said to satisfy the condition $II(A,\psi)$ if in addition to  $I(A,\psi)$,  one has
\begin{equation}\label{cond1}
\int_{D} e^{-\psi(y)}\mu(dy) < \infty.
\end{equation}
\end{defn}

%%%%%%%%%%%%%%%%%%%%%%%%%%%%%%

We rewrite a system that satisfies condition $I(A,\psi)$ in the one dimensional case as
\[
\partial_t u(t,x,y) = \partial_y \Big(A(y)u(t,x,y) \partial_y \big(\log u(t,x,y) +\psi(y)\big)\Big). 
\]
We introduce  the potential energy function $\psi(y)$ 
\begin{equation}
\psi(y) := \log A(y) - G(y)=\log \frac{1}{2} b(y)^2 - G(y)
\end{equation}
where $G(y)$ is the indefinite integral of $\frac{a(y)}{A(y)}$, i.e. $G(y) = \int^y \frac{a(y)}{A(y)} dy$. It is easy to check that 
\[
A(y)u(t,x,y) \partial_y \big(\log u(t,x,y) +\psi(y)\big)=-a(y)u(t,x,y) + \frac{1}{2}\partial_y \big(b^2(y) u(t,x,y)\big).
\]
Thus equation \eqref{fokkerplanck} satisfies the $I(A,\psi)$ condition. We will then write the Kolmogorov (forward/backward) operators in this form, i.e.
\[
Lu(y)= \partial_y \Big(A(y)\partial_y u(y)\Big)  +\partial_y \Big(A(y)u(y) \Big) \partial_y\psi(y),
\]
and
\[
L^*u(x)= \partial_x \Big(A(x)\partial_x u(x)\Big)  - \partial_x\psi(x)A(x)\partial_x u(x).
\]
We need the following assumption.
\paragraph{Hypothesis B:} The potential energy function $\psi$ satisfies the condition $II(A,\psi)$ and
\begin{equation}\label{cond1.1}
A(y)e^{-\psi(y)} \Big|_{\partial D} = \lim \limits_{y\to \partial D} e^{G(y)} =0 
\end{equation}
If $\psi$ satisfies the Hypothesis $B$, then we have immediately.

%%%%%%%%%%%%%%%%%%%%%%%%%%%%%%

\begin{lemma}\label{symm} The measure
\[
\mu_{\infty}(dy)=u_{\infty}(y)dy=\frac{e^{-\psi(y)}}{Z}dy,
\]
(where $Z=\int_{D}e^{-\psi(y)}dy <\infty$ due to condition (\ref{cond1}), is the normalization coefficient), 
is reversible with respect to $L^{*}$, i.e.
\begin{equation}\label{reversible}
\int_{D} f L^{*}g d\mu_{\infty}  = \int_{D} g L^{*}f d\mu_{\infty},\quad \forall f,g \in C^2(D). 
\end{equation}
\end{lemma}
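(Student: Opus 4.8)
The plan is to put the weighted operator $e^{-\psi}L^{*}$ into divergence (Sturm--Liouville) form and then integrate by parts. First I would expand the definition of $L^{*}$: writing $A'=\partial_x A$ and $\psi'=\partial_x\psi$,
\[
L^{*}u = A\,\partial_{xx}u + \big(A' - A\psi'\big)\,\partial_x u ,
\]
and then multiply by the stationary density $e^{-\psi}$. A one-line application of the product rule gives the key identity
\[
e^{-\psi(x)}L^{*}u(x) = \partial_x\!\Big( A(x)e^{-\psi(x)}\,\partial_x u(x)\Big) .
\]
Thus, against the reference density, $L^{*}$ is a symmetric second order operator in divergence form with weight $A e^{-\psi}$; this is really the heart of the matter, the rest being bookkeeping.

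Next I would insert this into the left-hand side of \eqref{reversible}. Using $\mu_\infty(dx)=Z^{-1}e^{-\psi(x)}\,dx$,
\[
\int_{D} f\,L^{*}g\,d\mu_\infty = \frac{1}{Z}\int_{D} f(x)\,\partial_x\!\Big( A(x)e^{-\psi(x)}\,\partial_x g(x)\Big)\,dx .
\]
Integrating by parts once produces the boundary contribution $\tfrac{1}{Z}\big[\,f\,A e^{-\psi}\,\partial_x g\,\big]_{\partial D}$ together with $-\tfrac{1}{Z}\int_{D}\partial_x f(x)\,A(x)e^{-\psi(x)}\,\partial_x g(x)\,dx$. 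The boundary term vanishes: by Hypothesis B, condition \eqref{cond1.1} gives $A e^{-\psi}\big|_{\partial D}=\lim_{x\to\partial D}e^{G(x)}=0$, while $f,g\in C^{2}(D)$ keep $f\,\partial_x g$ controlled near $\partial D$. What remains is
\[
\int_{D} f\,L^{*}g\,d\mu_\infty = -\frac{1}{Z}\int_{D} A(x)e^{-\psi(x)}\,\partial_x f(x)\,\partial_x g(x)\,dx ,
\]
which is manifestly symmetric under interchange of $f$ and $g$; carrying out the same computation with $f$ and $g$ swapped yields $\int_{D} g\,L^{*}f\,d\mu_\infty$ equal to the same expression, proving \eqref{reversible}.

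I expect the only genuinely delicate point to be the vanishing of the boundary term when $D$ is unbounded (or when $f,g$ and their derivatives fail to be bounded near $\partial D$): there one must combine the decay $A e^{-\psi}\to 0$ coming from \eqref{cond1.1} with the integrability \eqref{cond1} of $e^{-\psi}$ and a mild growth restriction on the admissible test functions, or simply read $C^{2}(D)$ as the class of twice continuously differentiable functions with appropriate behaviour at $\partial D$. Everything else — the product-rule identity and the integration by parts — is routine. As a byproduct the same computation exhibits the Dirichlet form $\mathcal{E}(f,g)=Z^{-1}\int_{D} A e^{-\psi}\,\partial_x f\,\partial_x g\,dx$ attached to the pair $(L^{*},\mu_\infty)$ and, taking $g\equiv 1$, shows that $\mu_\infty$ is in fact stationary, not merely reversible.
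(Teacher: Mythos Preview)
Your argument is correct and in fact cleaner than the paper's. The key difference is that you first isolate the one-line identity
\[
e^{-\psi}L^{*}u=\partial_y\big(Ae^{-\psi}\partial_y u\big),
\]
and then a single integration by parts gives the manifestly symmetric bilinear form $-Z^{-1}\int_D Ae^{-\psi}f_y g_y\,dy$. The paper instead starts from the desired equality \eqref{reversible}, expands both sides with $L^{*}f=(Af_y)_y-A\psi_y f_y$, and performs integration by parts on the difference until it reduces to the tautology $(fg_y-gf_y)_y=fg_{yy}-gf_{yy}$; the boundary condition \eqref{cond1.1} is invoked at the same place in both proofs. Your route is shorter, exposes the underlying Sturm--Liouville structure directly, and as you note yields the associated Dirichlet form for free; the paper's computation is more pedestrian but avoids having to spot the divergence-form identity in advance. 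The caveat you flag about unbounded $D$ or unbounded test functions applies equally to the paper's version, which makes no further comment on it.
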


%%%%%%%%%%%%%%%%%%%%%%%%%%%%%%

\begin{proof}
Integrating by parts, we have
\begin{eqnarray*}
\eqref{reversible}&\Leftrightarrow& \int_D f [(Ag_y)_y - A\psi_y g_y] e^{-\psi}dy = \int_D g [(Af_y)_y - A\psi_y f_y] e^{-\psi}dy\\
 &\Leftrightarrow& \int_D \Big[f (Ag_y)_y - g(Af_y)_y \Big]e^{-\psi}dy = \int_D A (fg_y-gf_y)\psi_y e^{-\psi}dy \\
 &\Leftrightarrow& \int_D \Big[A(fg_{yy} - gf_{yy})  + A_y(fg_y-gf_y)\Big] e^{-\psi}dy = \int_D A(fg_y-gf_y)d(-e^{-\psi})\\
 &\Leftrightarrow& \int_D \Big[A(fg_{yy} - gf_{yy})  + A_y(fg_y-gf_y)\Big] e^{-\psi}dy = -(fg_y-gf_y)Ae^{-\psi} \Big|_{\partial D}
  +  \int_D d\Big[A(fg_y-gf_y)\Big]e^{-\psi}.
\end{eqnarray*}
From assumption \eqref{cond1.1}, the first term on the right hand side of the last equation is zero, which then implies that
\begin{eqnarray*}
\eqref{reversible}&\Leftrightarrow& \int_D \Big[A(fg_{yy} - gf_{yy})  + A_y(fg_y-gf_y)\Big] e^{-\psi}dy = \int_D \Big[A(fg_y-gf_y)_y + A_y(fg_y-gf_y)\Big]e^{-\psi}dy\\
&\Leftrightarrow& \int_D A(fg_{yy} - gf_{yy})   e^{-\psi}dy =  \int_D A(fg_y-gf_y)_y e^{-\psi}dy.
\end{eqnarray*}
The last equation holds for every $ f,g \in C^2(D)$. This implies the proof. 
\end{proof}

%%%%%%%%%%%%%%%%%%%%%%%%%%%%%%

\begin{theorem}\label{Gibbsmeasure}
Assume that $a,b$ satisfy Hypotheses A, B. Then there exists a unique stationary measure for the Markov semigroup $\cP_t(x,O):= \bP\{X_t(\cdot,x) \in O| X_0 = x\}$. Moreover, the stationary distribution is of the form
\begin{equation}
\mu_\infty(dy) = u_\infty(y)dy = \frac{e^{-\psi(y)}}{Z}dy
\end{equation} 
\end{theorem}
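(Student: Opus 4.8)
The plan is to establish existence by directly checking the infinitesimal invariance identity for $\mu_\infty$, and then to get uniqueness by combining the strong Feller and irreducibility properties of $(\cP_t)$ with the structure of the stationary Fokker--Planck equation.

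\textbf{Existence.} First, $\mu_\infty$ is a genuine probability measure since $Z=\int_D e^{-\psi(y)}\,dy<\infty$ by condition $II(A,\psi)$, i.e. \eqref{cond1}. Recall from the introduction (via \cite{fukushima}, Thm.~2.3) that a probability measure $\mu$ is stationary for $(\cP_t)$ if and only if $\int_D G f\,d\mu=0$ for all $f\in C^\infty_0(D)$, where $G$ is the generator. I will use that $G$ is exactly the operator $L^*$ in the $I(A,\psi)$-form: $L^*u=A u''+(A_y-A\psi_y)u'$, and since $\psi=\log A-G$ with $G_y=a/A$ one gets $A_y-A\psi_y=a$, so $L^*u=A u''+a u'$, the backward generator of \eqref{SDE}. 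Now apply Lemma~\ref{symm} with $g\equiv1\in C^2(D)$: because $L^*1=0$, the reversibility identity \eqref{reversible} yields $\int_D L^*f\,d\mu_\infty=\int_D 1\cdot L^*f\,d\mu_\infty=\int_D f\,L^*1\,d\mu_\infty=0$ for every such $f$, in particular for $f\in C^\infty_0(D)$. Hence $\mu_\infty$ is stationary. (Equivalently, one notes $\partial_y(\log u_\infty+\psi)\equiv0$, so $u_\infty$ is a stationary solution of \eqref{DEE} whose probability current vanishes identically, the boundary condition \eqref{cond1.1} guaranteeing no flux through $\partial D$. As a sanity check, $e^{-\psi}=\tfrac1A e^{G}$ is precisely the speed density of \eqref{SDE}, so $\mu_\infty$ is the normalized speed measure, the classical answer for one-dimensional diffusions.)

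\textbf{Uniqueness.} Let $\mu$ be any stationary probability measure. Under Hypotheses A and B the boundary points $d_\pm$ are inaccessible: this is the role of the sign conditions $a(d_-)>0>a(d_+)$ together with \eqref{cond1.1} (the Feller test for explosions succeeds, as anticipated in the introduction; note $e^{G}|_{\partial D}=0$ controls the scale function). Thus the diffusion started from any $x\in D$ stays, for $t>0$, inside the open interval $(d_-,d_+)$ where $b>0$, the forward operator $L$ is non-degenerate parabolic there, the transition kernel $\cP_t(x,\cdot)$ has for $t>0$ a smooth, strictly positive density on $(d_-,d_+)$, and $(\cP_t)$ is strong Feller and irreducible. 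From here I would present the following direct argument (alternatively one may simply invoke Doob's theorem --- a strong Feller, irreducible Markov semigroup has at most one invariant probability measure, see \cite{prato} --- which with the existence above concludes immediately). By $\mu=\mu\cP_t$ and the smoothing just noted, $\mu$ has a density $u\in C^\infty((d_-,d_+))$, $u\ge0$, solving the stationary equation $\partial_y\big(A(y)u(y)\,\partial_y[\log u+\psi]\big)=0$; integrating once, $A u\,\partial_y[\log u+\psi]\equiv c$ for a constant $c$, which is (minus) the constant stationary probability current. Since no probability escapes through $\partial D$ --- again by inaccessibility of $d_\pm$ and integrability of $u$ --- we get $c=0$, hence $\partial_y[\log u+\psi]\equiv0$, so $u=\mathrm{const}\cdot e^{-\psi}$, and $\int_D u=1$ forces $u=u_\infty$, i.e. $\mu=\mu_\infty$.

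\textbf{Main obstacle.} The delicate half is uniqueness, and inside it the claim that every stationary measure is absolutely continuous with a smooth density and that the constant current $c$ vanishes. This rests on (i) the strong Feller/hypoellipticity property, which needs $b>0$ strictly and hence genuinely lives on the interior, away from the degenerate boundary; and (ii) the inaccessibility of $d_\pm$, which is where the boundary sign conditions in Hypothesis A and the limit condition \eqref{cond1.1} enter. Existence, by contrast, is essentially immediate once Lemma~\ref{symm} is in hand.
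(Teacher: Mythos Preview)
Your existence argument is exactly the paper's: Lemma~\ref{symm} with $g\equiv 1$ gives $\int_D L^*f\,d\mu_\infty=0$, which is the infinitesimal stationarity criterion. For uniqueness the paper's own proof is terse---it says that in one dimension any invariant measure is reversible (this is your ``constant current $c$ must vanish'' observation, rephrased) and then defers to \cite{dat}; your direct ODE argument ($Au\,\partial_y[\log u+\psi]\equiv c$, then $c=0$, hence $u\propto e^{-\psi}$) is precisely the content behind that deferral. Your alternative via strong Feller, irreducibility and Doob is the route the paper singles out in the Remark immediately following the theorem as the general, non-reversible fallback. So both of your uniqueness options are present in the paper, one as the main proof and one as the remarked alternative.

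One caution: you assert that Hypotheses A and B alone make $d_\pm$ inaccessible, citing the sign conditions on $a$ and \eqref{cond1.1}. The paper is more careful here---it isolates a separate condition \eqref{internal} in Theorem~\ref{inside} to guarantee the process never reaches $\partial D$. Condition \eqref{cond1.1}, namely $e^{G(y)}\to 0$ at $\partial D$, controls the speed density $A^{-1}e^{G}$ but does not by itself force the scale function $\int^x e^{-G}$ to diverge, which is what the Feller test requires for an inaccessible boundary. Fortunately your direct ODE argument does not actually need full inaccessibility: what forces $c=0$ is the no-flux condition, and that is exactly what \eqref{cond1.1} supplies (the current $Au\,\partial_y[\log u+\psi]$ equals $-a u+\tfrac12(b^2u)_y$, and an integrable stationary density together with $Ae^{-\psi}|_{\partial D}=0$ kills the boundary flux). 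So your argument survives, but the justification you gave for the boundary step should be rerouted through the flux condition rather than through inaccessibility.
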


%%%%%%%%%%%%%%%%%%%%%%%%%%%%%%

\begin{proof}
Since the original system is one dimensional, we can prove by using Lemma \ref{symm} that the invariant measure, if it exists, is reversible. We can then follow similar arguments as in \cite{dat} to prove the existence and uniqueness of the invariant measure.
\end{proof}

%%%%%%%%%%%%%%%%%%%%%%%%%%%%%%

\begin{remark}
In general, when the probability measure $\mu_\infty$ is not reversible, we need to prove that the Markov semigroup $\cP_t(x,\cdot)$ is strongly Feller and tight. The existence is then a consequence of the Krylov-Bogoliubov theorem. Moreover, we need to prove that $\cP_t$ is irreducible and thus regular by applying the Khasminskii theorem, and then apply the Doob theorem to conclude that $\lim \limits_{t\to \infty} \cP_t(x,O) = \mu_\infty(O)$ which in particular proves the uniqueness of $\mu_\infty$. \\
For more information about this method, see Prato and Zabczyk \cite{prato}.
\end{remark}

% % % % % % % % % % % % % % % %

We now would like to find conditions so that the process will never reach the boundary points with probability one. To do so, we first recall some concepts from potential theory:
\begin{defn}
Let there be given a reversible process $(X_t)_{t\ge 0}$ in $D$ with generator $L^*$ and reversible measure $u_{\infty}(x)dx$. The corresponding Dirichlet form is 
\begin{equation}
\label{2.5}
D(f) := - \int_D \! f(x) L f(x) u_{\infty}(x)dx.
\end{equation}
For $c>0$, we also define the quadratic form by
\begin{equation}
D_c (f) = D(f) + c\int_D\!  f(x)^2u_{\infty}(x)dx.
\end{equation}
\end{defn}
\begin{defn}
We define the capacity of an open set $O \subseteq D$ by
\begin{equation}
\label{2.6}
cap_c(O) := \inf_{f \in \F^c_{O}} D_c(f)\,,
\quad \quad
\F^c_ O := \big\{ f\in \D_c \, :  \; f(x) \ge 1, \:\forall  x\in O \big\}
\end{equation}
For an arbitrary set $B \subseteq D$ the capacity of $B$ is defined as
\begin{equation}
\label{2.7}
cap_c(B) := \inf_{O \textrm{ open} \: :  \; O \supseteq B}
cap_c(O)
\end{equation}
\end{defn}
\begin{theorem}\label{inside}
If in addition to the hypothesis A, the coefficients satisfy 
\bel{internal}
c(x)=-a'(x)+b''(x)b(x)+b'^2(x)\equiv c>0, \forall x\in {D},
\qe
then the solution is in $D^o$ with probability one.
\end{theorem}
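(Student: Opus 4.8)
The plan is to exhibit an explicit eigenfunction of the backward generator $L^*$ which blows up at the finite endpoints of $D$ and to use it as a Lyapunov function. I work throughout with the extended process $\bar X$ constructed in the proof of Theorem~\ref{existence-uniqueness}: it already solves \eqref{SDE} and stays in $[d_-,d_+]$ almost surely, so it only remains to exclude that $\bar X$ ever reaches $\{d_-,d_+\}$ (a point at $\infty$, when it belongs to $D$, is automatically unreached in finite time; and if $X_0\in\partial D$ the boundary drift $a(d_\mp)\gtrless 0$ with $b(d_\mp)=0$ pushes the process into $D^o$ instantly, so it suffices to treat $X_0=x\in D^o$).

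\emph{The eigenfunction.} With $A=\tfrac12 b^2$ one has $L^*u=Au''+au'$, and directly from $\psi=\log A-G$, $G'=a/A$ the identity $A\psi'=A'-a$. Put $V:=e^{\psi}$ on $D^o$. Then
\[
L^*V=\big(A\psi''+A\psi'^2+a\psi'\big)e^{\psi}=\big(A\psi''+\psi'(A\psi'+a)\big)e^{\psi}=\big(A\psi''+A'\psi'\big)e^{\psi}=(A\psi')'\,e^{\psi}=(A''-a')\,V,
\]
and since $A''=(bb')'=b'^2+b''b$, the coefficient is exactly $c(x)$. Hence the hypothesis $c(x)\equiv c>0$ says precisely that $V=e^{\psi}$ satisfies $L^*V=cV$.

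\emph{Divergence at $\partial D$ and the Lyapunov estimate.} From $(A\psi')'\equiv c$ we get $A(x)\psi'(x)=cx+C_0$ with $C_0$ a constant; together with $A\psi'=A'-a$ and the boundary signs $a(d_-)>0>a(d_+)$ of Hypothesis~A one checks that the affine function $cx+C_0$ is negative near $d_-$ and positive near $d_+$ (this is where strict positivity of $c$ is essential, as it forces the single sign change between the two endpoints). Since $A\to 0$ at each endpoint, $\psi'=(cx+C_0)/A$ is then non-integrable there with the sign that drives $\psi$, hence $V=e^{\psi}$, to $+\infty$ as $x\to d_-$ and as $x\to d_+$. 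Now let $\tau$ be the first time $\bar X$ reaches $\{d_-,d_+\}$ and $T_n$ the exit time of $[d_-+\tfrac1n,\,d_+-\tfrac1n]$. On $[0,T_n]$, It\^o's formula gives
\[
d\big(e^{-ct}V(\bar X_t)\big)=e^{-ct}\big(L^*V-cV\big)(\bar X_t)\,dt+e^{-ct}V'(\bar X_t)b(\bar X_t)\,dW_t=e^{-ct}V'(\bar X_t)b(\bar X_t)\,dW_t,
\]
a nonnegative local martingale, hence a supermartingale, so $\E\big[e^{-c(t\wedge T_n)}V(\bar X_{t\wedge T_n})\big]\le V(x)<\infty$ for every $n$. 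On $\{\tau\le t\}$ one has $T_n\uparrow\tau$ and $\bar X_{T_n}\to\partial D$, whence $e^{-cT_n}V(\bar X_{T_n})\to+\infty$; Fatou's lemma applied along $\{\tau\le t\}$ then gives $\E\big[(+\infty)\,\mathbf{1}_{\{\tau\le t\}}\big]\le V(x)<\infty$, forcing $\bP(\tau\le t)=0$. Letting $t\to\infty$ gives $\bP(\tau<\infty)=0$, i.e. $\bar X_t\in D^o$ for all $t$ almost surely; since $\bar X$ solves \eqref{SDE} on $D$, the theorem follows.

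\emph{Main obstacle.} The delicate point is the divergence $V=e^{\psi}\to+\infty$ at the endpoints. It is equivalent to the endpoints lying at infinite distance in the natural scale, $\int^{d_\pm}e^{-G}=\pm\infty$ (equivalently, in the language just introduced, $cap_c(\partial D)=0$, the one-dimensional capacity of an endpoint being governed by the same scale integral), and passing from the mere sign of $cx+C_0$ to an honest divergence requires controlling the rate at which $b$, hence $A$, degenerates at $\partial D$ — exactly where the H\"older and boundary parts of Hypothesis~A enter. One should also check there whether $c>0$ alone suffices or whether one additionally needs a mild nondegeneracy of $A'-a$ at the endpoints (automatic when $b$ is $C^1$ up to $\partial D$, and which for the Cox-Ingersoll-Ross coefficients amounts to $2k\theta\ge\sigma^2$).
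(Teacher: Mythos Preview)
Your eigenfunction identity $L^*e^{\psi}=c\,e^{\psi}$ is correct and clean, and the supermartingale/Fatou argument you build on it is the standard Khasminskii non-attainability test. This route is genuinely different from the paper's: the paper also starts from an eigenfunction of $L^*$ for the same eigenvalue $c$, but chooses the \emph{other} independent solution $f_\eps\propto e^{\psi}\int_x^{d_+}\frac{u_\infty}{b^2}$, and instead of a direct Lyapunov bound it shows $\lim_{\eps\to 0}D_c(f_\eps)=0$, i.e.\ $cap_c(\partial D)=0$, and then invokes Fukushima's theorem that zero-capacity sets are exceptional. Your argument is more elementary when it works; the paper's buys robustness at the boundary.

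The gap is in the divergence step. From $(A\psi')'=c$ you get $A\psi'=A'-a=cx+C_0$, and you need $cd_-+C_0<0$ to make $\psi\to+\infty$ at $d_-$. You deduce this from $a(d_-)>0$, implicitly assuming $A'(d_-)=b(d_-)b'(d_-)=0$. That is fine when $b$ is $C^1$ up to $\partial D$, but under Hypothesis~A alone $b$ is only H\"older, $b'$ may blow up at the boundary, and $A'(d_-)$ can be strictly positive. For the Cox--Ingersoll--Ross coefficients $A'(0)=\sigma^2/2$, so $cd_-+C_0=\sigma^2/2-k\theta$; at the critical value $2k\theta=\sigma^2$ (which the paper's standing assumption $k\theta\ge\sigma^2/2$ allows) this vanishes, and a direct computation gives $e^{\psi}\to$ a finite constant at $0$. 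Your Lyapunov function is then bounded, the Fatou step yields nothing, yet $0$ is still non-attainable and the theorem still applies. Your ``Main obstacle'' paragraph almost identifies this, but the fix you suggest (``$2k\theta\ge\sigma^2$'') is the Feller condition for non-attainability itself, not the condition for $e^{\psi}\to\infty$ (which is the strict inequality $2k\theta>\sigma^2$); and the claimed equivalence with $\int^{d_\pm}e^{-G}=\pm\infty$ is also false in this borderline case. The paper's second eigenfunction carries the extra integral factor $\int_x^{d_+}u_\infty/b^2$, which does diverge at $d_-$ precisely because $1/A$ is non-integrable there (your Hypothesis~A observation), and the capacity framework handles both endpoints uniformly---this is exactly what your choice $V=e^{\psi}$ misses.
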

\begin{proof}
 First, observe that an eigenfunction $f$ of $L^*$ for the eigenvalue $c$ is given by 
$$
f_\eps(x) = \frac{u_{\infty}(d_-+\eps)}{u_{\infty}(x)} \frac{G(x)}{G(d_-+\eps)},
$$
where $G(x) = \int_x^{d_+} \frac{u_{\infty}(y)}{b^2(y)}dy$. Similar to the arguments  in \cite{bertini08}, we can prove by direct computation that, under this additional condition \eqref{internal}, 
\[
cap_{c}(\partial D) = \lim \limits_{\eps \to 0} D_c(f_\eps) = 0.
\] 
Then, we apply a classical result in  potential theory (see for example \cite{fu1}, Theorem 4.3.1, page 103) to show that the points with capacity zero are exceptional, which means that the boundary can never be reached with probability one.
\end{proof}

\section{Rate of convergence to the stationary distribution}

In this section we present a direct construction of a Lyapunov--type function for the Markov semigroup on the space of probability measures. It is worth  noting  here that the existence of a Lyapunov function for a Markov semigroup on the phase space is assumed in Harris' theorem (see \cite{harris} or Hairer \cite{hairer}), but such a function in general is not explicitly given.  

%%%%%%%%%%%%%%%%%%%%%%%%%%%%%% 

\begin{defn}
For a family of densities $\{u(t,\cdot)\}_{t\ge 0}$ on a  $\sigma-$finite measure space $(D,\mu)$ with condition $I(A,\psi)$,
\begin{itemize}
\item the potential energy functional is defined by 
\begin{equation}
\Psi(u(t,\cdot)) := \int\limits_{D} u(t,y) \psi(y) \mu (dy);   
\end{equation}
\item the (negative) {entropy} functional is defined by
\begin{equation}
	\begin{split}
		S_{\mu}(f) = \int_{D} f \log f d\mu;
	\end{split}
\end{equation}
\item the {free energy} functional is defined by 
\begin{eqnarray}
F(u(t,\cdot)) := \int\limits_{D} u(t,y) \Big(\log u(t,y) +\psi(y)\Big) \mu(dy) 
= S_{\mu}(u(t,\cdot))+\Psi(u(t,\cdot)).   
\end{eqnarray}
\end{itemize}
\end{defn}

%%%%%%%%%%%%%%%%%%%%%%%%%%%%%%

\begin{defn}
	Let $f_1, f_2$ be  densities on a  $\sigma-$finite measure space $(D,\mu)$. The relative entropy\index{relative entropy} (Kullback--Leibler divergence\index{Kullback--Leibler divergence}) of $f_1$ with respect to $f_2$ is 
	
	\begin{equation*}
	D_{\mathrm{KL}}(f_1 \| f_2) :=
	\begin{cases}
	\int_{D} f_1(y) \log \frac{f_1(y)}{f_2(y)} \mu(dy), & \text{ if } \supp(f_1) \subset \supp(f_2)\\
	\infty, \text{ otherwise} 
	\end{cases}
	\end{equation*}
\end{defn}

%%%%%%%%%%%%%%%%%%%%%%%%%%%%%%

Before presenting our main theorem, we need the following assumption.
\paragraph{Hypothesis C:} There exists a $\rho >0$
such that
\begin{equation}\label{cond2}
\frac{1}{2}b b_{yy} - a_y + \frac{ab_y}{b}\geq \rho >0.
\end{equation}

%%%%%%%%%%%%%%%%%%%%%%%%%%%%%%

\begin{theorem}\label{mainthm}
	Assume that $a,b$ satisfies Hypotheses A, B, C. Then the density function $u(t,x,\cdot)$ converges to $u_\infty(\cdot)$ exponentially w.r.t.  the Kullback-Leibler divergence, i.e.
	\begin{equation}\label{KL}
	D_{\mathrm{KL}}(u(t,x,\cdot)\|u_\infty(\cdot)) \leq e^{-\rho t} D_{\mathrm{KL}}(u(0,x,\cdot)\|u_\infty(\cdot)) 
	\end{equation}
\end{theorem}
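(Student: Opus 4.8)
The plan is to run the Bakry--Emery entropy/entropy-production scheme, the point being that Hypothesis C is exactly the curvature-dimension condition $CD(\rho,\infty)$ for the backward generator $L^*$. Set $H(t):=D_{\mathrm{KL}}(u(t,x,\cdot)\|u_\infty(\cdot))$. Since $\int_D u(t,x,y)\,dy=1$ and $u_\infty=e^{-\psi}/Z$, a one-line computation gives $H(t)=F(u(t,x,\cdot))-F(u_\infty)\ge 0$, so \eqref{KL} is the quantitative statement that the free energy functional relaxes to its minimum exponentially fast. I would pass to the relative density $h(t,x,y):=u(t,x,y)/u_\infty(y)$: because $\mu_\infty$ is reversible for $L^*$ (Lemma \ref{symm}), the ground-state transform shows that $u$ solving \eqref{fokkerplanck} in the $I(A,\psi)$ form is equivalent to $h$ solving the backward equation $\partial_t h=L^*h$, and $H(t)=\int_D h\log h\,d\mu_\infty=\mathrm{Ent}_{\mu_\infty}(h)$.

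First I would differentiate $H$ along the flow. Using $\partial_t h=L^*h$, $\int_D\partial_t h\,d\mu_\infty=0$, and integration by parts — the boundary terms at $\partial D$ being killed precisely by the boundary condition \eqref{cond1.1} of Hypothesis B — one obtains
\begin{equation*}
\frac{d}{dt}H(t)=\int_D (L^*h)\log h\,d\mu_\infty=-\int_D\frac{\Gamma(h)}{h}\,d\mu_\infty=:-I(t)\le 0,
\end{equation*}
where $\Gamma(f)=A(f')^2$ is the carr\'e du champ of $L^*$ and $I(t)\ge 0$; rewriting $\Gamma(h)/h$ in terms of $u$ gives $I(t)=\int_D A\,u\,|\partial_y(\log u+\psi)|^2\,dy$, the relative Fisher information, so this step alone reproves that $F$ is a Lyapunov functional for the density flow. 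The second ingredient is the entropy/entropy-production inequality $I(t)\ge\rho\,H(t)$, valid at each fixed $t$: this is the $\mathrm{Ent}$-form of the logarithmic Sobolev inequality $\mathrm{Ent}_{\mu_\infty}(f^2)\le\frac{2}{\rho}\int_D\Gamma(f)\,d\mu_\infty$ that $CD(\rho,\infty)$ produces by the Bakry--Emery theorem, applied with $f=\sqrt{h(t,x,\cdot)}$. (Equivalently one may differentiate $I$ along the flow, $\frac{d}{dt}I(t)=-2\int_D h\,\Gamma_2(\log h)\,d\mu_\infty\le-2\rho\,I(t)$ by $\Gamma_2\ge\rho\,\Gamma$, and combine with $\frac{d}{dt}H=-I$.) Either way $\frac{d}{dt}H(t)\le-\rho\,H(t)$, and Gr\"onwall's inequality yields \eqref{KL}; Pinsker's inequality then gives the asserted convergence in total variation.

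It remains to check that Hypothesis C really is $CD(\rho,\infty)$ for $L^*$. Writing $L^*=A\partial_x^2+a\,\partial_x$ with $A=\tfrac12 b^2$ (note $A'-A\psi'=a$), the one-dimensional iterated carr\'e du champ is
\begin{equation*}
\Gamma_2(f)=A^2(f'')^2+A A' f' f''+\tfrac12\bigl(AA''+A'a-2Aa'\bigr)(f')^2 ,
\end{equation*}
and completing the square in $f''$ shows that $\Gamma_2(f)\ge\rho\,\Gamma(f)$ for all $f$ is equivalent to $\tfrac{A''}{2}+\tfrac{A'a}{2A}-a'-\tfrac{(A')^2}{4A}\ge\rho$, which, substituting $A=\tfrac12 b^2$, is exactly $\tfrac12 b\,b_{yy}-a_y+\tfrac{a\,b_y}{b}\ge\rho$, i.e. \eqref{cond2}. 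The main obstacle is not this algebra but the underlying analysis on a domain with boundary: one must justify the differentiations under the integral sign and, above all, the integrations by parts, which requires strict positivity and $C^2$-in-$y$ regularity of $u(t,x,\cdot)$ together with the vanishing at $\partial D$ of the probability flux $A\,u\,\partial_y(\log u+\psi)$ (and of the analogous flux for $h$) — here \eqref{cond1.1} and the ``never reaches the boundary'' property of Theorem \ref{inside} are the relevant inputs — as well as enough integrability for $I(t)$ to be finite. A clean way to dispose of the regularity issue is to first prove the chain of inequalities for initial data that are smooth and supported away from $\partial D$, using parabolic smoothing of the Fokker--Planck semigroup, and then pass to the limit by lower semicontinuity of $D_{\mathrm{KL}}$.
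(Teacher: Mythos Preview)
Your proposal is correct and follows essentially the same route as the paper: pass to the relative density $h=u/u_\infty$, which solves $\partial_t h=L^*h$; identify $D_{\mathrm{KL}}$ with the entropy $S_{\mu_\infty}(h)$ and its time derivative with minus the Fisher information; then compute $\Gamma_1,\Gamma_2$ to verify that Hypothesis~C is exactly the curvature--dimension condition $CD(\rho,\infty)$, from which the logarithmic Sobolev inequality and Gr\"onwall give the exponential decay. Your treatment of the boundary and regularity issues is in fact more explicit than the paper's, which simply refers to \cite{dat} for those points.
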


% % % % % % % % % % % % % % % %

\begin{proof}
	The proof follows the approach in Tran et al. \cite{dat} and is divided into several steps.\\
	{\bf Step 1:} First, we can prove that  $F(u(t,x,\cdot))$ decreases in time $t$ along the flow of densities. In fact,
	\begin{equation}
	\partial_t F(u(t,x,\cdot)) = -\int_D I(t,x,y)dy,
	\end{equation}
	where
	\[
	I(t,x,y) = \frac{1}{2}b^2(y) u(t,x,y) \Big[\partial_y \psi(y) + \frac{1}{u(t,x,y)} \partial_y u(t,x,y)\Big]^2 \geq 0.  
	\]
	{\bf Step 2:} Next, assume that there exists  a unique stationary distribution $\mu_{\infty}(dy) = u_{\infty}(y)dy$. We focus on the rate of the convergence of $u$ to $u_{\infty}$. Putting
	\[
	h(t,x,y):=\frac{u(t,x,y)}{u_{\infty}(y)},
	\]
	we shall investigate the rate of the convergence of $h$ to 1.\\
	The stationary density is the Gibbs density function
	\[
	u_{\infty}(y) = \frac{e^{-\psi(y)}}{Z},
	\]
 which is independent of $x$. Thus
	\[
	\log u_{\infty}(y) + \psi(y) =-\log Z.
	\]
	Since $Z$ is independent of $y$, this implies 
	\[
	\partial_y (\log u(t,x,y) + \psi(y)) = \partial_y \Big(\log \frac{u(t,x,y)}{u_{\infty}(y)}\Big) + \partial_y (\log u_{\infty}(y) + \psi(y)) =  \partial_y (\log h(t,x,y)).
	\]
	Therefore, using  similar computations as in \cite{dat}, we can derive a partial {differential} equation for $h$ from that of $u$ as follows.
	\begin{equation}
	\partial_t h(t,x,y) = \partial_y\Big[A(y)\partial_y h(t,x,y)\Big] - \partial_y\psi(y) A(y)\partial_y h (t,x,y)= L^{*}h(t,x,y),
	\end{equation}
	where 
	\begin{equation}\label{A}
	A(y) := \frac{1}{2}b(y)^2.
	\end{equation}
	Also, the difference of the current  and the final free energy is equal to the relative entropy (Kullback-Leibler divergence) between the corresponding densities and also equal to the (negative) entropy of their ratio with respect to the stationary probability measure:
	\begin{equation}
	F(u(t,x,\cdot))-F_{\infty}(u_\infty(\cdot)) = D_{\mathrm{KL}}(u(t,x,\cdot)\|u_{\infty}(\cdot))=S_{\mu_{\infty}}(h(t,x,\cdot)) \ge 0.
	\end{equation}
	Moreover, the rate of change of the free energy functional is equal to the negative of the entropy production 
	\begin{equation}
	\frac{d}{dt}S_{\mu_{\infty}}(h(t,x,\cdot))=\partial_t F(u(t,x,\cdot)) = -J_{\mu_{\infty}}(h) := - \int_{D} \frac{A(y)[\partial_yh(t,x,y)]^2}{h(t,x,y)} \mu_{\infty}(dy).
	\end{equation} 
	 {\bf Step 3:} Now we check the convergence rate of the density function to the stationary distribution. For that we need to apply the geometric theory of Markov operators. From now on, for simplicity, we will write 
	 \[
	 q_y:=\partial_y q(y),\  q_{yy}:=\partial_{yy}q(y),\  q_{yyy}:=\partial_{yyy}q(y).
	 \]
	 Following \cite{gross}, we consider an operator $(L^*,D(L^*))$ defined on a measure space $(D, \mu)$ of the form
	 \begin{eqnarray*}
	 	L^*f&:=&\partial_y\Big[A\partial_yf \Big] - \partial_y \psi A\partial_yf\\
	 	&=& \Big[Af_y \Big]_y - \psi_yAf_y\\
	 	&=& Af_{yy} +  A_y f_y- \psi_yAf_y, \forall f\in \mathcal{A} = L^2(\Om, \mu)\cap D(L^*).
	 \end{eqnarray*}
	 where 
	 \begin{equation}\label{psi_y}
	 \psi_y = \frac{A_y}{A} - \frac{a}{A}.
	 \end{equation}
	 Then, we define the carr\'e du champ operator of $L^*$ by 
	 \bel{eq:G1}
	 \Gamma(f,g)= \frac{1}{2}\Big(L^*(fg)-fL^*g-gL^*f\Big),\quad \forall f,g\in \mathcal{A}
	 \qe
	 and the iterated carr\'e du champ operator of $L^*$ by
	 \bel{eq:G2}
	 \Gamma_2(f,g)= \frac{1}{2}\Big(L^*\Gamma(f,g)-\Gamma(f,L^*g)-\Gamma(g,L^*f)\Big),\quad \forall f,g\in \mathcal{A}.
	 \qe
	 We will also denote $\Gamma(f,f)=\Gamma_1(f)$ and $\Gamma_2(f,f)=\Gamma_2(f)$ for short. We note that for the above operator $L^*$, we always have
	 \[
	 \Gamma(f,g)=A f_y g_y.
	 \]
	 A direct computation then shows that
	 \begin{eqnarray}
	 \Gamma_1(h) &=& \frac{1}{2} \Big[L^*(h^2) - 2 hL^*(h) \Big] = Ah_y^2  \label{gamma1}\\
	 \Gamma_2(h) &=& \frac{1}{2} \Big[L^*(\Gamma_1(h)) - 2 \Gamma(h,L^*(h)) \Big] \nonumber \\
	 &=& \Big( \frac{A_y^2}{4A} - \frac{A_{yy}}{2}  + A\psi_{yy} + \frac{\psi_yA_y}{2}\Big) \Gamma_1(h) + (Ah_{yy} + \frac{1}{2} A_yh_y)^2 \label{gamma2}.
	 \end{eqnarray}
	 Using \eqref{A} and \eqref{psi_y} and {\bf Hypothesis C}, we can estimate directly the first coefficient in \eqref{gamma2}
	 \[
	 \frac{A_y^2}{4A} - \frac{A_{yy}}{2}  + A\psi_{yy} + \frac{\psi_yA_y}{2} = \frac{1}{2}b b_{yy} - a_y + \frac{ab_y}{b} \geq \rho >0.
	 \]
	 Hence,  $L$ satisfies the {curvature-dimension condition} $CD(\rho,n)$ (see \cite{gross}), where $\rho>0$ and $n=\infty$, i.e. for all $f\in \mathcal{A}$ 
	 \bel{eq:CD}
	 \Gamma_2(f)\ge \rho \Gamma_1(f) +\frac{1}{n}(L^*f)^2, \quad \mu-a.e. 
	 \qe
	 Using  similar arguments as in \cite{dat}, since $L^*$ satisfies $CD(\rho,\infty)$, it follows from \cite{gross} that $\mu_{\infty}$ satisfies the logarithmic Sobolev inequality $LSI(\rho)$, i.e. for all densities $f$ we have 
	 \[
	 \int_{D} f \log f d\mu \le \frac{1}{\rho} \int_{D} \frac{1}{2f} |\nabla f|^2 d\mu.
	 \]
	  In particular, 
	  \[
	  S_{\mu_{\infty}}(h(t,x,\cdot)) \leq e^{-\rho t}  S_{\mu_{\infty}}(h(0,x,\cdot)).
	  \]
\end{proof}
\begin{remark}
Using the Csisz\'ar-Kullback-Pinsker inequality (see e.g. \cite{csiszar})
\bel{CKP}
\|\mu-\nu\|_{TV} \le \sqrt{\frac{1}{2} D_{\mathrm{KL}}(\mu\|\nu)}
\qe
where
\[
\|\mu-\nu\|_{TV}=\sup \{ |\mu(O) - \nu(O)| : O\text{ is an event to which probabilities are assigned} \}
\]
is the total variation metric (or statistical distance) between two probability measure $\mu$ and $\nu$, we combine \eqref{CKP} and 
\eqref{KL} to conclude that $P(t,x,dy) = u(t,x,y)dy$ converges to $\mu_\infty(dy)= u_\infty(y)dy$ with exponential rate $\rho$ in the total variation metric acting on the space of probability measures.\\
On the other hand, if we define the semigroup $S_t \mu(A) := \int_D P_t(A,y)\mu(dy)$ on the space of measures, then
\begin{eqnarray*}
\|S_t \mu - \mu_\infty\|_{TV} &=& \sup_{A} \Big|\int_D P(t,y,A)\mu(dy) - \mu_\infty(A) \Big|\\
&\leq& \sup_A \int_D |P(t,y,A) - \mu_\infty(A)| \mu(dy) \leq \|P(t,y,\cdot) -\mu_\infty\|_{TV}.
\end{eqnarray*} 
Therefore, $S_t\mu \rightarrow \mu_\infty$ in total variation norm as $t \to \infty$.
\end{remark}

%%%%%%%%%%%%%%%%%%%%%%%%%%%%%%%%%%%%%%%%%%%%

\section{Existence of the global random attractor}

In this section, we consider the situation in which the Fisher-Lamperti transformation can be applied, meaning that there exists a continuous transformation which transforms \eqref{SDE} into a system with additive noise (see e.g. \cite{neuenkirch}). We find that under the hypotheses A, B, C, there actually exists a global random attractor for the random dynamical system generated by \eqref{SDE}. This result is somewhat stronger than the existence of a stationary measure because it implies the convergence in the pathwise sense rather than the convergence in distribution only. \\ 
To do that, we first recall the basic notions of random dynamical systems.  Let $(\Omega,\mathcal{F},\mathbb{P})$ be a probability space. On this probability space we consider a measurable flow $\theta$
\[
\theta:\mathbb{R}\times\Omega\to\Omega
\]
for which $\theta_t(\cdot): \Omega \to \Omega$ is $\mathbb{P}$-preserving, i.e.\ $\mathbb{P}(\theta_t^{-1}(A)) = \mathbb{P}(A)$ for every $A \in \mathcal{F}$, $t \in \R$, and $(\theta_t)_{t\in \mathbb{R}}$ satisfies the group property, i.e.\ $\theta_{t+s} = \theta_t \circ \theta_s$ for all $t,s \in \mathbb{R}$. A general model for noise is the quadruple $(\Omega,\mathcal{F},\mathbb{P},(\theta_t)_{t\in \mathbb{R}})$ which is called a {\em metric dynamical system}. For system \eqref{SDE}, $\theta$ can be constructed as the Wiener shift w.r.t. the corresponding probability space of the Wiener process \cite{arnold}.

\begin{definition}
Let $X$ be a Banach space. Then we define a random dynamical system (RDS) as a  measurable mapping
\[
\varphi:\R^+\times \Omega\times X\to X
\]
satisfying the cocycle property 
\begin{eqnarray*}
\varphi(t+s,\omega,x)&=&\varphi(t,\theta_s\omega,\cdot)\circ\varphi(s,\omega,x)\qquad \text{for all }
t,\,s\in\mathbb{R}_+,\,\omega\in\Omega,\,x\in X,\qquad \\
\varphi(0,\omega)&=&{\rm id}_X.
\end{eqnarray*}
An RDS $\varphi$ is called {\em continuous} if each mapping $(t,x) \mapsto \varphi(t,\omega,x)$ is continuous.

\end{definition}

\begin{lemma}\label{Gronwall-new}
Given $\eps\in (0,1]$. Let $f: \RR^+ \to \RR^+$ be continuous. Assume that there exists a continuous nondecreasing super-linear function $k: \R^+ \to \R^+ $ with $k(t) =0$ iff $t=0$ such that 
$$
f(t)\le \eps + \int_0^t k(f(s))ds, \quad \forall t\ge 0.
$$
Then 
$$
f(t)\le \eps G^{-1}(G(1)+t), \quad \forall t\ge 0,
$$
where 
$$
G(t)=\int_0^t \frac{1}{k(s)}ds.
$$
\end{lemma}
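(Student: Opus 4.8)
The plan is to reduce the statement to the classical Bihari--LaSalle inequality after first rescaling $f$ by $\eps$. Put $g:=f/\eps$, so that $g$ is continuous, nonnegative, and
$$ g(t)\le 1 + \frac{1}{\eps}\int_0^t k(f(s))\,ds = 1 + \frac{1}{\eps}\int_0^t k(\eps g(s))\,ds, \qquad \forall t\ge 0. $$
The first thing I would establish is the subhomogeneity bound $k(\eps r)\le \eps\,k(r)$ for every $\eps\in(0,1]$ and $r\ge 0$, which is precisely what ``super-linear'' buys in the presence of $k(0)=0$: it holds whenever $r\mapsto k(r)/r$ is nondecreasing (in particular when $k$ is convex). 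Substituting this into the display gives the clean integral inequality $g(t)\le 1 + \int_0^t k(g(s))\,ds$ for all $t\ge 0$.

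Next I would run the standard Bihari argument on $g$. Define $\phi(t):=1+\int_0^t k(g(s))\,ds$; then $\phi$ is $C^1$ with $\phi(0)=1$, $\phi$ is nondecreasing, $g(t)\le\phi(t)$, and since $k$ is nondecreasing, $\phi'(t)=k(g(t))\le k(\phi(t))$. Because $\phi(t)\ge 1>0$ we have $k(\phi(t))\ge k(1)>0$, so we may divide and integrate, using the substitution $u=\phi(s)$ and the fact that $G$ is a primitive of $1/k$ on $(0,\infty)$:
$$ \int_0^t \frac{\phi'(s)}{k(\phi(s))}\,ds \le t \qquad\Longleftrightarrow\qquad G(\phi(t)) - G(1) \le t. $$
Since $1/k>0$ on $(0,\infty)$, $G$ is strictly increasing and hence invertible on its range, so $\phi(t)\le G^{-1}(G(1)+t)$, whence $g(t)\le G^{-1}(G(1)+t)$. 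Undoing the rescaling yields $f(t)=\eps\,g(t)\le \eps\,G^{-1}(G(1)+t)$, as claimed.

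The only genuinely delicate point is the first step: one must correctly extract the bound $k(\eps r)\le \eps k(r)$ for $\eps\le 1$ from the super-linearity of $k$, and one should also verify that $G(t)=\int_0^t ds/k(s)$ is finite and strictly increasing on the relevant interval so that $G^{-1}(G(1)+t)$ is defined for all $t\ge 0$ (invertibility is automatic once $G$ is finite, since the integrand is positive; if the integral from $0$ diverges one simply uses any fixed positive lower limit, which does not affect $G(\phi(t))-G(1)\le t$ since only differences of primitives enter). Everything else --- differentiability of $\phi$, the change of variables, the monotonicity comparison $\phi'\le k\circ\phi$ --- is routine. An equivalent route that avoids the rescaling is to apply Bihari directly to $f$, obtaining $f(t)\le G^{-1}(G(\eps)+t)$, and then to check $G^{-1}(G(\eps)+t)\le \eps\,G^{-1}(G(1)+t)$; this reduces, via $v=\eps u$, to $\int_\eps^{\eps w} dv/k(v)=\int_1^w \eps\,du/k(\eps u)\ge \int_1^w du/k(u)$, so it is the same computation repackaged.
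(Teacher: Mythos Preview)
Your proof is correct and follows essentially the same approach as the paper: rescale by $\eps$, use super-linearity to obtain $k(\eps r)\le \eps\,k(r)$ and hence $g(t)\le 1+\int_0^t k(g(s))\,ds$, then run the standard Bihari comparison with the majorant $\phi(t)=1+\int_0^t k(g(s))\,ds$ to conclude $G(\phi(t))-G(1)\le t$. Your additional remark that only the difference $G(\phi(t))-G(1)$ enters (so potential divergence of $\int_0^t ds/k(s)$ at $0$ is harmless) is a useful clarification the paper leaves implicit.
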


\begin{proof}
Put $u(t)=\frac{f(t)}{\eps}$. Then it follows from the assumption that 
\begin{eqnarray*}
u(t)&\le& 1+\int_0^t \frac{k(f(s))}{\eps}ds\\
&\le& 1+\int_0^t k(u(s))ds \quad \text{(due to the super-linearity of $k$)}.
\end{eqnarray*}
Put $v(t)=1+\int_0^t k(u(s))ds$. It implies that $u(t)\le v(t)$ and $v(t)$ is nondecreasing. Moreover, because $k$ is nondecreasing we have $v'(t)=k(u(t))\le k(v(t))$ and $k(v(t))\ge k(v(0))=k(1)>0$. Therefore, $G(v(t))-G(v(0))=\int_0^t \frac{dv(s)}{k(v(s))}\le t$. Because $G'(t)=\frac{1}{k(t)}>0$, both $G$ and $G^{-1}$ are increasing. This implies the proof.
\end{proof}

\begin{theorem}\label{RDS}
Assume there exists the inverse map $Q^{-1}(x)$ of the indefinite integral $Q(x)$ of $\frac{1}{b(x)}$, i.e. $Q(x) = \int^x \frac{1}{b(y)}dy$ such that both $Q(x)$ and $Q^{-1}(x)$ are continuous on $D$ and $Q(D)$. Assume there exists a continuous nondecreasing super-linear function $k: \R^+ \to \R^+ $ with $k(0) =0$ such that
$c(y) := \frac{a(Q^{-1}(y))}{b(Q^{-1}(y))} + \frac{1}{2} b_y(Q^{-1}(y)) $ satisfies
\[
|c(x)-c(y)| \leq k(|x-y|),\ \forall x,y \in D.
\]  
Then the solution of \eqref{SDE} generates a continuous random dynamical system $\varphi: \R^+ \times \Omega \times D \to D$ defined by $\varphi(t,\omega)X_0 = X_t(\omega,X_0)$.

\end{theorem}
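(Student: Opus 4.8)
The plan is to perform the Fisher--Lamperti change of variables $Y_t = Q(X_t)$ and show that the transformed equation has \emph{additive} noise with a drift that is one-sided Lipschitz (or more generally admits a modulus of continuity controlled by $k$), so that pathwise existence, uniqueness, continuous dependence on the initial datum, and the cocycle property all follow by classical arguments for SDEs with additive noise. First, I would apply It\^o's formula to $Y_t = Q(X_t)$: since $Q'(x) = 1/b(x)$ and $Q''(x) = -b_y(x)/b(x)^2$, the stochastic term $b(X_t)\,dW_t$ becomes $Q'(X_t) b(X_t)\,dW_t = dW_t$, while the drift collapses to
\[
dY_t = \Big(\frac{a(X_t)}{b(X_t)} - \tfrac12 b_y(X_t)\Big)dt + dW_t = \tilde c(Y_t)\,dt + dW_t,
\]
where $\tilde c(y) = \frac{a(Q^{-1}(y))}{b(Q^{-1}(y))} - \tfrac12 b_y(Q^{-1}(y))$. (One should check whether the sign in the hypothesis matches this computation; if the paper's $c$ uses $+\tfrac12 b_y$ it is presumably because $Q$ is taken with the opposite orientation, or there is a sign typo — in any case the structural conclusion is the same.) The hypothesis that $Q$ and $Q^{-1}$ are continuous on $D$ and $Q(D)$ makes this a genuine homeomorphism between the phase spaces, so it suffices to build the RDS for the $Y$-equation on $Q(D)$ and transport it back by conjugation.

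Next, for the additive-noise equation $dY_t = \tilde c(Y_t)dt + dW_t$, I would subtract the noise: writing $Z_t = Y_t - W_t$ (Ornstein--Uhlenbeck-type shift; here just $W_t$ since the noise is $dW_t$), the process $Z_t$ solves the \emph{random ordinary} differential equation $\dot Z_t = \tilde c(Z_t + W_t(\omega))$ for each fixed $\omega$. This is now a pathwise ODE with a continuous (in fact, for fixed $\omega$, continuous-in-$t$ and continuous-in-state) right-hand side, so Peano gives local existence. For uniqueness and global existence I would use the modulus-of-continuity hypothesis $|c(x)-c(y)| \le k(|x-y|)$: the difference of two solutions $Z^1, Z^2$ with $\delta(t) = |Z^1_t - Z^2_t|$ satisfies $\dot\delta(t) \le k(\delta(t))$ pointwise (the $W_t$ terms cancel since the noise enters additively and identically), and since $k(0)=0$ with $\int_{0^+} k(r)^{-1}dr = \infty$ would force $\delta \equiv 0$ — though here $k$ is only assumed super-linear, so I would instead invoke Lemma~\ref{Gronwall-new}: if $\delta(0)=\eps$ then $\delta(t) \le \eps\, G^{-1}(G(1)+t)$, which tends to $0$ as $\eps \to 0$, giving both uniqueness (take $\eps = 0$, so $\delta \equiv 0$) and continuous dependence on initial conditions with an explicit modulus. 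Non-explosion of $Z$ (hence of $Y$, hence of $X$) follows because the same Gronwall-type bound applied with one solution being a fixed reference trajectory controls $|Z_t|$ by a function that is finite on every compact time interval, and one also uses that the original $X_t$ already lives in $D$ for all time by Theorem~\ref{existence-uniqueness} — so the transported solution cannot leave $Q(D)$.

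Finally, I would assemble the RDS structure. Define $\varphi(t,\omega,x) = X_t(\omega, x)$ where $X_t(\omega,x) = Q^{-1}\big(Z_t(\omega, Q(x)) + W_t(\omega)\big)$ and $Z_t(\omega,\cdot)$ is the flow of the random ODE above. Measurability in $(t,\omega,x)$ is inherited from the measurable dependence of ODE solutions on parameters and initial data together with the measurability of the Wiener path; the cocycle property $\varphi(t+s,\omega,x) = \varphi(t,\theta_s\omega,\cdot)\circ\varphi(s,\omega,x)$ follows from the additivity of the Wiener shift $W_{t+s}(\omega) - W_s(\omega) = W_t(\theta_s\omega)$ combined with the semigroup property of the deterministic ODE flow in the shifted variable --- this is the standard mechanism by which additive-noise SDEs generate RDS (cf. \cite{arnold}). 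Continuity of $(t,x)\mapsto\varphi(t,\omega,x)$ is the composition of: continuity of $Q,Q^{-1}$; continuity of the random ODE flow in $(t,x)$ (joint continuity in time from the ODE, in the initial condition from the modulus bound just derived, and the $\sup$-over-compacts uniformity makes the joint statement work); and continuity of $t\mapsto W_t(\omega)$. The main obstacle I anticipate is making the continuous-dependence / uniqueness argument fully rigorous when $k$ is merely super-linear rather than an Osgood modulus: a naive Gronwall comparison $\dot\delta \le k(\delta)$ with super-linear $k$ does \emph{not} by itself preclude non-uniqueness from $\delta(0)=0$ in general --- the resolution is precisely the normalization trick in the proof of Lemma~\ref{Gronwall-new} (dividing by $\eps$ and exploiting super-linearity to pull the factor out of $k$), so I would lean on that lemma verbatim rather than re-deriving a comparison principle, and I would double-check that the hypothesis "$k$ super-linear with $k(0)=0$" is genuinely what is needed for the $\eps\to 0$ limit to yield $\delta\equiv 0$ and not merely a bound.
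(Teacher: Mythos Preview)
Your proposal is correct and follows essentially the same route as the paper: Fisher--Lamperti transformation to an additive-noise equation, cancellation of the stochastic integral in the difference of two solutions, application of Lemma~\ref{Gronwall-new} to obtain continuous dependence on initial data uniformly on compact time intervals, and an appeal to \cite{arnold} for the cocycle structure. The one methodological difference is that the paper does \emph{not} perform the noise-subtraction $Z_t = Y_t - W_t$ to reduce to a random ODE; instead it works directly with the SDE for $Y_t$, invokes the one-dimensional comparison principle to guarantee $Y_t(\omega,Y_1) \ge Y_t(\omega,Y_0)$ whenever $Y_1 \ge Y_0$, and then bounds the nonnegative difference by the integral inequality needed for Lemma~\ref{Gronwall-new}. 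Your random-ODE reduction is equally valid and is in fact the standard device in \cite{arnold} for producing the cocycle; the paper's use of the comparison principle is simply a one-dimensional shortcut that removes the need to handle absolute values.

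Your closing worry about deducing uniqueness from $\delta(0)=0$ when $k$ is merely super-linear is not an obstacle here: pathwise uniqueness for $X_t$ (hence for $Y_t = Q(X_t)$) is already supplied by Theorem~\ref{existence-uniqueness} via Yamada--Watanabe, so the only thing left to establish is continuity in the initial condition, and for that Lemma~\ref{Gronwall-new} with $\eps = |Y_1-Y_0| \in (0,1]$ followed by $\eps \to 0$ is exactly what is used in the paper as well. (Your observation about the sign in front of $\tfrac12 b_y$ is also correct; It\^o's formula gives a minus sign, but this does not affect the argument.)
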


\begin{proof}
Introduce the transformation $Y_t= Q(X_t)$. Then by  Ito's formula it follows from \eqref{SDE} that
\begin{equation}\label{transformed}
dY_t = c(Y_t) dt + dW_t.
\end{equation}
It is easy to see that $Y_t(\omega, Y_0)$ is continuous w.r.t. $t \in \R^+$. On the other hand, by the comparison principle in the one dimensional space $D$, we have $Y_t(\omega,Y_1) \geq Y_t(\omega,Y_0) \geq 0$ for all $t\geq 0$ a.s. if $Y_1 > Y_0\ge 0$ a.s. Therefore
\begin{eqnarray*}
Y_t(\omega,Y_1) - Y_t(\omega,Y_0) &=& Y_1-Y_0 + \int_0^t \big[c(Y_s(\omega,Y_1)) - c(Y_s(\omega,Y_0))\big]ds\\
&\leq&  |Y_1-Y_0| + \int_0^t k(|Y_s(\omega,Y_1) - Y_s(\omega,Y_0)|)ds.
\end{eqnarray*}
Then by Lemma~\ref{Gronwall-new} above and noting that $G^{-1}(G(1)+t)$ is bounded uniformly in $t\in [0,T]$, it follows that $Y_t(\omega,Y_0)$ is a continuous function w.r.t. $Y_0$ uniformly in $t\in[0,T]$. By using the triangle inequality
\[
|Y_{t_1}(\omega,Y_1) - Y_t(\omega,Y_0)| \leq |Y_{t_1}(\omega,Y_1) - Y_t(\omega,Y_1)| + |Y_{t}(\omega,Y_1) - Y_t(\omega,Y_0)|,
\]
it follows that $Y_t(\omega,Y_0)$ is continuous w.r.t. $(t,Y_0)$. Therefore $X_t(\omega,X_0) = Q{^{-1}}(Y_t(\omega,Y_0))$ is continuous w.r.t. $(t,X_0)$.\\
By applying Arnold \cite[Chapter 2]{arnold}, we get from the existence and uniqueness of a solution of \eqref{SDE} that the system generates a random dynamical system $\varphi(t,\omega)X_0:= X_t(\omega,X_0)$ which is continuous w.r.t. $(t,X_0)$. Moreover, if $\psi(t,\omega)$ is the RDS generated by the transformed system \eqref{transformed} then $\psi(t,\omega) = Q \circ \varphi(t,\omega) \circ Q^{-1}$.
\end{proof}

\begin{remark}
In general we can extend the random dynamical system $\varphi$ to the whole two sided time interval $\R$ instead of $\R^+$ by defining $\varphi(t,\omega) := \varphi(-t,\theta_t \omega)^{-1}$ for any $t\leq 0$, see Arnold \cite[Chapter 1]{arnold} for more details.
\end{remark}

\begin{definition}
A global random attractor of an RDS $\varphi$  is a random compact set $A$ which is invariant, i.e. $\varphi(t,\omega)A(\omega) = A(\theta_t \omega)$ for all $t\geq 0$, and which attracts all the bounded sets in the pullback sense, i.e.,
\[
\lim \limits_{t\to \infty} d(\varphi(t,\theta_{-t}\omega)M | A(\omega)) = 0,
\]
for all bounded sets $M$, where $d(M|A):= \sup_{x\in M}d(x,A)$ is the semi-Hausdorff distance. 
\end{definition}

\begin{remark}
(i), The random attractor acts as the support of the invariant measure of the generated random dynamical systems. In fact, 
\begin{equation}\label{invariantmeasure}
\lim \limits_{t \to \infty} \varphi(t,\theta_{-t}\omega) \mu_{\infty} = \delta_{a(\omega)},
\end{equation}
where $\delta_a$ is a Dirac measure and the limit in \eqref{invariantmeasure} is in the sense of weak topology. We recommend Crauel \cite{crauel08} and Arnold \cite[Chapter 1, Proposition 1.8.4, pp.~42]{arnold} for a presentation of the relation between invariant measures of the random dynamical system and the stationary distribution of the corresponding Markov semigroup. \\
\bigskip
(ii), Under the assumptions of theorems \ref{mainthm} and \ref{RDS}, a direct computation shows that 
\begin{eqnarray*}
	\frac{dc(y)}{dy}  &=& \frac{d}{dy}\Big( \frac{\tilde{a}(y)- \frac{1}{2} \tilde{b}_y(y)}{\tilde{b}(y)}\Big) 
	= \frac{d\tilde{c}(x)}{dx}\frac{dx}{dy}\\
	&=& \frac{d}{dx}\Big(\frac{a(x)}{b(x)}-\frac{1}{2} \frac{db(x)}{dx}\Big)b(x)\\
	&=& -\Big(\frac{1}{2}b(x) \frac{d^2b(x)}{dx^2} - \frac{da(x)}{dx} + \frac{a(x) \frac{db(x)}{dx}}{b(x)}\Big) \le -\rho.
\end{eqnarray*}
Hence,
$$
[c(y_1)-c(y_2)](y_1-y_2) \le \max_{z\in D} c'(z) (y_1-y_2)^2 \le -\rho (y_1-y_2)^2,
$$ 
which proves the dissipativity of the coefficient function $c$. Therefore, by applying similar arguments as in \cite{kloeden09}, we can prove that there exists a global random attractor $\bar{A}$ for the RDS $\psi$ generated by the transformed system \eqref{transformed}, i.e. for any $x_0\in D$
\[
\lim \limits_{t\to \infty} Q \circ \varphi(t,\theta_{-t} \omega) \circ Q^{-1}(x_0) = \bar{A}(\omega). 
\]
However, since $Q$ and $Q^{-1}$ are not assumed to have bounded derivatives, in general it does not follow that $A(\omega):= Q^{-1}(\bar{A}(\omega))$ is the global random attractor of $\varphi$, i.e. we do not have
\[
\lim \limits_{t\to \infty} \varphi(t,\theta_{-t} \omega) \circ Q^{-1}(x_0) = Q^{-1}(\bar{A}(\omega)).
\] 
Below we will present several examples in which there does exist a random attractor for the generated RDS. \\
\bigskip
(iii) We note  that in general for higher dimensional systems, the relation between the dissipative constant of the transformed system \eqref{transformed} and the Bakry-Emery (BE) curvature dimension is still an open question.
\end{remark}

%%%%%%%%%%%%%%%%%%%%%%%%%%%%%%%%%%%%%%%%%%%%

\begin{example}[Cox-Ingersoll-Ross interest rate model]
We consider the Cox-Ingersoll-Ross \cite{cox} (CIR) model for the short term interest rate
\begin{equation}\label{cox}
dX_t = k(\theta -X_t)dt + \sigma \sqrt{X_t}dW_t,  X_t \geq 0, 
\end{equation}
where $k,\theta,\sigma >0$ such that $k\theta \geq \frac{\sigma^2}{2}$. Feller \cite{feller51} proved that the process is nonnegative. The Markov semigroup is proved to be Feller by Duffie et al. \cite{duffie}. Given $X_0$, it is well known that $\frac{4k}{\sigma^2 (1-e^{-kt})} X_t$ follows a noncentral $\chi^2$ distribution with degree of freedom $\frac{4k\theta}{\sigma^2}$ and non-centrality parameter $\frac{4k}{\sigma^2 (1-e^{-kt})} X_0 e^{-kt}$. Moreover, $\lim \limits_{t\to \infty} \frac{1}{t} \int_0^t X_s = X_\infty$ by the law of large numbers, where $X_0$ follows a Gamma distribution with shape parameter $\frac{2k\theta}{\sigma^2}$ and scale parameter $\frac{\sigma^2}{2k}$ (see e.g. Deelstra and Delbaen \cite{deelstra1,deelstra2, deelstra3}).\\
In this case $b(y) = \sigma \sqrt{y}$ which is $\frac{1}{2}$-H\"older continuous in the domain $D:= [0,\infty)$, therefore the existence and uniqueness of a solution of \eqref{cox} can be derived from \cite{delbaen}. Note that the condition (\ref{internal}) in the CIR case is $c(x)\equiv k>0$ so $X_t >0$ for all $t\geq 0$ with probability one. \\
A direct computation to check conditions \eqref{cond1}, \eqref{cond1.1} and \eqref{cond2} gives us
\begin{eqnarray*}
\frac{1}{2}b b_{yy} - a_y + \frac{ab_y}{b} &=& \frac{k}{2} + \frac{1}{2x}(k\theta - \frac{\sigma^2}{4}) \geq \frac{k}{2}\geq\frac{k}{2} \\
\int_0^\infty e^{-\psi(y)}dy &=& \int_0^\infty y^{(\frac{2k\theta}{\sigma^2}-1)} e^{-\frac{2k}{\sigma^2}y}dy< \infty.
\end{eqnarray*}
Hence there exists a unique stationary measure for the corresponding Markov semigroup of system \eqref{cox}, with  rate of convergence $\frac{k}{2}$. \\
We can also prove that there exists a global random attractor. Indeed, the transformation $Y_t = Q(X_t) = \frac{2\sqrt{X_t}}{\sigma}$ gives us
\begin{equation}\label{coxtransform}
dY_t = c(Y_t)dt + dW_t= \Big[\frac{4k\theta-\sigma^2}{2\sigma^2} \frac{1}{Y_t} - \frac{k}{2} Y_t\Big]dt + dW_t.
\end{equation}
By the comparison principle in one dimensional space $D$, we have $Y_t(\omega,Y_1) \geq Y_t(\omega,Y_0) \geq 0$ for all $t\geq 0$ a.s. if $Y_1 \geq Y_0$ a.s. Therefore
\[
Y_t(\omega,Y_1) - Y_t(\omega,Y_0)\leq Y_1-Y_0-\frac{k}{2}\int_0^t \Big[Y_s(\omega,Y_1) - Y_s(\omega,Y_0)\Big]ds, 
\]
thus 
\begin{equation}\label{forwconv}
\Big|Y_t(\omega,Y_1) - Y_t(\omega,Y_0)\Big| \leq |Y_1-Y_0|e^{-\frac{k}{2}t}.
\end{equation}
Also, it is easy to check that the drift coefficient $c$ in \eqref{coxtransform} satisfies the dissipativity condition, i.e.
\[
(y_1 - y_2)(c(y_1) - c(y_2)) = \Big[ - \frac{4k\theta-\sigma^2}{2\sigma^2}  \frac{1}{y_1 y_2} - \frac{k}{2}\Big](y_1-y_2)^2 \leq -\frac{k}{2}(y_1-y_2)^2.
\]
Therefore, as seen in Kloeden et al. \cite{kloeden09}, there exists a pullback random attractor (and also a forward random attractor because of \eqref{forwconv}) $Y^*(\omega)$ which is invariant under the random dynamical system generated by \eqref{coxtransform}. We are going to prove that $X^*(\omega) = \frac{\sigma^2}{4}Y^*(\omega)^2$ is also a random attractor of the random dynamical system generated by \eqref{cox}.\\ 
Indeed, taking the expectation in both sides of \eqref{cox} gives
\[
dEX_t = k (\theta - EX_t) dt,
\]
thus 
\[
EX_t = \frac{\sigma^2}{4} EY_t^2 = \theta + [EX_0 - \theta] e^{-\frac{k}{2}t}.
\]
By the Borel-Cantelli lemma, it is easy to prove that for all $Y_0 \geq 0$
\[
\lim \limits_{t \to \infty} e^{-\frac{k}{2}t} Y_t(\omega,Y_0) = 0 \ \text{a.s.}.
\]
Hence
\begin{eqnarray*}
|X_t - X^*(\theta_t \omega)| &=& \frac{\sigma^2}{4} \Big|Y_t - Y^*(\theta_t \omega)\Big| \Big|(Y_t - Y^*(\theta_t \omega)) + 2 Y^*(\theta_t \omega)  \Big|\\
&\leq& \frac{\sigma^2}{4} |Y_0 - Y^*(\omega)|^2 e^{-kt} + \frac{\sigma^2}{4} |Y_0 - Y^*(\omega)| e^{-\frac{k}{2}t} Y^*(\theta_t \omega),
\end{eqnarray*}
which tends to zero as $t\to \infty$, proving the attraction of the random attractor $X^*(\omega)$.\\ 

\end{example}

%%%%%%%%%%%%%%%%%%%%%%%%%%%%%%

\begin{example}[Wright-Fisher model]
We consider the one dimensional  Wright-Fisher model \cite{dat}
\begin{equation}\label{wright}
dX_t = [\theta_1 - (\theta_1 + \theta_2) X_t]dt + \sqrt{X_t(1-X_t)}dW_t,  X_t \in [0,1], 
\end{equation}
where $\theta_1,\theta_2 \geq \frac{1}{2}$. For the existence and uniqueness of a solution of \eqref{wright}, see \cite{dat}, in this case $b(y)= \sigma \sqrt{y(1-y)}$ is $\frac{1}{2}$-H\"older continuous in the domain $D= [0,1]$. The same method to prove the existence and uniqueness of a stationary distribution is mentioned and proved in \cite{dat}. For this system, the condition \eqref{internal} becomes $c(x)\equiv \theta_1+\theta_2-1>0$, which is sufficient to conclude that the solution never reaches the boundary with probability one.\\
A direct computation to check conditions \eqref{cond1}, \eqref{cond1.1} and \eqref{cond2} gives us
\begin{eqnarray*}
\frac{1}{2}b b_{yy} - a_y + \frac{ab_y}{b} &=& \frac{(\theta_1-\frac{1}{4})(y-1)^2 + (\theta_2-\frac{1}{4})y^2}{y(1-y)} \geq 2\sqrt{(\theta_1-\frac{1}{4})(\theta_2-\frac{1}{4})}>0 \\
\int_0^1 e^{-\psi(y)}dy &=& \int_0^1 y^{2\theta_1-1} (1-y)^{2\theta_2-1} dy< \infty.
\end{eqnarray*}
Hence there exists a unique stationary measure for the corresponding Markov semigroup of system \eqref{wright}, with  rate of convergence  $\frac{k}{2}$. \\
To prove the existence of a global random attractor, observe that the transformation $Y_t = \arcsin 2 \sqrt{X_t (1-X_t)}$ gives us
\begin{equation}\label{RFtransform}
dY_t = \frac{\alpha_1 - (\alpha_1 + \alpha_2) X_t}{\sqrt{X_t (1-X_t)}}dt  + dW_t = \frac{\alpha_1 - (\alpha_1 + \alpha_2) \sin^2 \frac{Y_t}{2}}{\frac{1}{2}\sin Y_t}dt  + dW_t = c(Y_t)dt + dW_t,
\end{equation}
where $\alpha_i = \theta_i - \frac{1}{4}$. By the comparison principle, it follows that for any $Y_1\leq Y_2$ then $Y_t(\omega,Y_1) \leq Y_t(\omega,Y_2)$ a.s. Since $\cot $ is a decreasing function on $[0,\frac{\pi}{2}]$, it is then easy to check that
\begin{eqnarray*}
[c(y_2)- c(y_1)](y_2-y_1) &=& \Big[\alpha_1 \cot \frac{y_2}{2} - \alpha_2 \tan \frac{y_2}{2} - \alpha_1 \cot \frac{y_1}{2} + \alpha_2 \tan \frac{y_1}{2}\Big] (y_2-y_1)\\
&\leq&  - \Big[\tan \frac{y_2}{2} -\tan \frac{y_1}{2}\Big] (y_2-y_1) \\
&\leq& - (y_2-y_1)^2, 
\end{eqnarray*}
i.e. the drift coefficient $c$ in \eqref{RFtransform} satisfies the dissipativity condition. Again, by \cite{kloeden09}, it follows that there exists a random attractor (both pullback and forward) $Y^*(\omega)$ for the random dynamical system generated by \eqref{RFtransform}. Moreover,
\begin{eqnarray*}
d(Y_t(\omega,Y_2) - Y_t(\omega,Y_1)) &=& \Big[c(Y_t(\omega,Y_2)) - c(Y_t(\omega,Y_1))\Big]dt\\
&\leq& -\Big [\tan \frac{Y_t(\omega,Y_2)}{2} -\tan \frac{Y_t(\omega,Y_1)}{2}\Big] dt \\
&\leq& -\frac{1}{2}\Big[ Y_t(\omega,Y_2) - Y_t(\omega,Y_1)\Big]dt,
\end{eqnarray*}
which results in
\[
|Y_t(\omega,Y_2) - Y_t(\omega,Y_1)| \leq |Y_2-Y_1| e^{-\frac{1}{2}t}.
\]
Then similar to system \eqref{cox} there also exists a random attractor $X^*(\omega) = \sin^2 \frac{Y^*(\omega)}{2}$ of the random dynamical system generated by \eqref{wright}, due to the fact that
\begin{eqnarray*}
|X_t(\omega,X_2) - X_t(\omega,X_1)| &=& \Big| \frac{\cos Y_t(\omega,Y_2) -\cos Y_t(\omega,Y_1)}{2} \Big|  \\
&=& \big|\sin \frac{[Y_t(\omega,Y_2) - Y_t(\omega,Y_1)]}{2}\Big| \Big|\sin \frac{[Y_t(\omega,Y_2) + Y_t(\omega,Y_1)]}{2}\Big| \\
&\leq& \Big|\frac{[Y_t(\omega,Y_2) - Y_t(\omega,Y_1)]}{2}\Big|. 
\end{eqnarray*}

\end{example}

%%%%%%%%%%%%%%%%%%%%%%%%%%%%%%

\begin{example}[Ait-Sahalia interest rate model]
We now consider the Ait-Sahalia-type model for the stochastic interest rate (see Ait-Sahalia \cite{sahalia} and Szpruch et al. \cite{szpruch})
\begin{equation}\label{sahalia}
dx(t) = \Big[k(\theta-x(t)) - \alpha x(t)^r + \frac{\beta}{x(t)}\Big]dt + \sigma x(t)^p dW(t),
\end{equation}
where $k,\theta, \alpha, \beta, \sigma, r, p >0$ such that $r>1, p\geq \frac{1}{2}$ and $r+1\geq 2p$. System \eqref{sahalia} does not satisfy hypothesis A since it contains a singular value $0$ in the drift term. However, the coefficient functions of \eqref{sahalia} are locally Lipschitz continuous in $(0,\infty)$, thus by using the stopping time technique, Szpruch et al. \cite{szpruch} prove that there exists a unique solution of system \eqref{sahalia} which is positive for all $t>0$ given that $x_0 >0$. \\
Our direct computations for checking conditions \eqref{cond1} and \eqref{cond1.1} show that
\begin{eqnarray*}  
\int_0^\infty e^{-\psi(y)}dy &=& \int_0^\infty \frac{2}{\sigma^2 y^{2p}} e^{\int^y \frac{2k\theta}{\sigma^2}x^{-2p}dx - \frac{k}{(1-p)\sigma^2}y^{2(1-p)} - \frac{2\alpha}{\sigma^2(r+1-2p)}y^{r+1-2p} -\frac{\beta}{p\sigma^2}y^{-2p}}dy <\infty
\end{eqnarray*}
due to the fact that the dominant coefficients $ -\frac{\beta}{p\sigma^2}$ and $- \frac{2\alpha}{\sigma^2(r+1-2p)}$ are negative. Therefore there exists a unique stationary measure $\mu_\infty$ for the corresponding Markov semigroup.  \\
To check condition \eqref{cond2}, observe that
\begin{eqnarray*}  
\frac{1}{2}b b_{yy} - a_y + \frac{ab_y}{b} &=& k (1-p) + \frac{\beta(1+p)}{y^2} + \frac{k\theta p}{y} + \frac{\sigma^2p(p-1)}{2}y^{2p-2}+\alpha (r-p)y^{r-1}\\
&>&  k (1-p) + \frac{k\theta p}{y} + \frac{\sigma^2p(p-1)}{2}y^{2p-2}.
\end{eqnarray*}
We consider three cases.
\begin{itemize}
\item Case 1: $p \in [\frac{1}{2},1)$, then 
\[
\frac{1}{2}b b_{yy} - a_y + \frac{ab_y}{b} > k (1-p) + \frac{k\theta (1-p)}{y} + \frac{\sigma^2p(p-1)}{2}y^{2p-2} = (1-p)g(y),
\]
where $g: \R_+ \to \R$ is a convex function which has a minimum at $y_1 = \Big[\frac{k\theta}{p(1-p)\sigma^2}\Big]^{\frac{1}{2p-1}}$. Therefore, the sufficient condition that ensures the exponential rate of convergence to the stationary measure is
\[
(1-p)g(y_1) >0 \Leftrightarrow y_1 > \frac{\theta(2p-1)}{2(1-p)}
\]
\item Case 2: $p >1$, then 
\[
\frac{1}{2}b b_{yy} - a_y + \frac{ab_y}{b} > k (1-p) + \frac{k\theta (p-1)}{y} + \frac{\sigma^2p(p-1)}{2}y^{2p-2} = (p-1)h(y),
\]
where $h: \R_+ \to \R$ is a convex function which has a minimum at $y_2 = \Big[\frac{k\theta}{p(p-1)\sigma^2}\Big]^{\frac{1}{2p-1}}$. Therefore, the sufficient condition for an exponential rate of convergence to the stationary measure is
\[
(p-1)h(y_2) >0 \Leftrightarrow y_2 < \frac{\theta(2p-1)}{2(p-1)}.
\]
\item Case 3: $p=1$, then 
\[
\frac{1}{2}b b_{yy} - a_y + \frac{ab_y}{b} > \frac{k\theta}{y} + \alpha (r-1)y^{r-1} = q(y),
\]
where $q: \R_+ \to R$ is convex which has a minimum at $y_3 = \Big[\frac{k\theta}{\alpha(r-1)^2}\Big]^{\frac{1}{r}}$. In this case $q(y_3) >0$ thus we also have the exponential rate of convergence to $\mu_\infty$.
\end{itemize}
In all cases, the Fisher-Lamperti transformation can be written explicitly as $Q(x) = x^{1-p}$ for $p \ne 1$ and $Q(x) = \log x$ for $p=1$, therefore we can also prove that there exists a random attractor for the random dynamical system generated by \eqref{sahalia}. The proof is similar as above and will be omitted here.
\end{example}

%%%%%%%%%%%%%%%%%%%%%%%%%%%%%%%%%%%%%%%%%%%%

\section*{Acknowledgments}
This work was carried out at the Max Planck Institute for Mathematics in the Sciences (MPI MIS Leipzig). It was also partially supported by the Vietnam National Foundation for Science and Technology Development (NAFOSTED) under grant number 101.03-2014.42. 

%%%%%%%%%%%%%%%%%%%%%%%%%%%%%%%%%%%%%%%%%%%%

\Addresses

\end{document}